\newtheorem{lemma}{Lemma}[section]
\newtheorem{theorem}{Theorem}[section]
\newtheorem{definition}{Definition}
\newtheorem{corollary}{Corollary}[section]
\newtheorem{remark}{Remark}
\begin{document}
\title{The degrees, number of edges, spectral radius and weakly Hamilton--connectedness of bipartite graphs
\footnotetext {* Corresponding author. This project is  supported by Natural Science Foundation of Guangdong (Nos. 2014A030313640).}}

\author{Jia Wei$^{1}$, Zhifu You$^{2,*}$\\
\small \it $^{1}$Department of Mathematics, South China University of Technology\\
\small \it Guangzhou 510640, P.R. China\\
\small \it $^{2}$School of  Mathematics and Systems Science, Guangdong Polytechnic Normal University\\
\small \it Guangzhou 510665, P.R. China\\
\small \it E-mail: 201520122177@mail.scut.edu.cn, youzhf@hotmail.com}
\date{}
\maketitle

\begin{center}
\begin{minipage}{120mm}
{\small {\bf Abstract.}
A path of a graph $G$ is called a Hamilton path if it passes through all the vertices of $G$.
A graph is Hamilton-connected if any two vertices are connected by a Hamilton path.
Note that any bipartite graph is not Hamilton-connected. We consider the weak version of Hamilton-connected property among bipartite graphs.
A weakly Hamilton-connected graph is a balanced bipartite graph $G=(X,Y,E)$ that there is a Hamilton path between any vertex $x\in X$ and $y\in Y$.
In this paper, we present  some  degrees,  number of edges,   and spectral radius conditions for a simple balanced bipartite graph to be weakly Hamilton--connected. }

{\small{\bf Keywords:} \ Weakly Hamilton--connected; Balanced bipartite graph; Degree; Number of edges; Spectral radius }

{\bf MSC:  }  05C50 
\end{minipage}
\end{center}

\section{Introduction}
In this paper, only connected graphs without loops and multiple edges are  considered.
A graph $G$ is \emph {bipartite} if its vertex set can be partitioned into two subsets $X$ and $Y$,
such that every edge has one end in $X$ and one end in $Y$. The partition $(X,Y)$ is called the \emph {bipartition} of  graph $G$.  $X$ and $Y$ are  its \emph {parts}.
 The graph $G$ is  \emph {balanced bipartite}  when $|X|=|Y|$.
Let $G=G(X,Y,E)$ be a balanced bipartite graph with the bipartition $(X,Y)$ and edge set $E(G)$.
The \emph {quasi--complement} of $G$, denoted by $\widehat{G}$, is a graph with vertex set $V(\widehat{G})=V(G)$
and edge set $E(\widehat G)=\{xy|x\in X,y\in Y,xy\notin E(G)\}$.
Let $e(G)$ and $\delta(G)$ be the number of edges and  minimum degree of  graph $G$, respectively.
The set of neighbours of a vertex $u$ in $G$ is denoted by $N_G(u)$ and
let $d_G(u)=|N_G(u)|$. For $S\subseteq V(G)$, the \emph {induced subgraph} $G[S]$ is the graph whose vertex set is $S$ and edge set is $\{uv\in E(G)\mid u,v\in S\}$. \emph {The disjoint union} $G_1+G_2$ of two graphs $G_1$ and $G_2$, is the graph with the vertex set $V(G_1)\cup V(G_2)$ and edge set $E(G_1)\cup E(G_2)$.
$K_{m,n}$ is the complete bipartite graph with parts of sizes $m$ and $n$.

A path or  cycle of $G$ is called a \emph {Hamilton path or Hamilton cycle} if it passes through all the vertices of $G$. A graph $G$ is called \emph{traceable or Hamilton} if $G$ has a Hamilton path or Hamilton cycle. A graph is \emph {Hamilton--connected} if any two vertices are connected by a Hamilton path.
Let $P$ be a path of $G$ with a given direction. For two vertices $x$ and $y$ on $P$,
we use $x\overrightarrow{P}y$ ($x\overleftarrow{P}y$) to denote the segment from $x$ to $y$  of $P$ along (against) the direction.

Note that any bipartite graph $G=G(X,Y,E)$ is not Hamilton--connected. If $\big||X|-|Y|\big|\geq2$, then there is no a Hamilton path for any two vertices. If  $\big||X|-|Y|\big|=1$, then we can only consider whether there is a Hamilton path between any two different vertices of  the part with  size $max\{|X|,|Y|\}$. If $|X|=|Y|$, then we can only consider whether there is a Hamilton path for any two different vertices in different parts.

 \begin{definition}\label{de1} A balanced bipartite graph is weakly Hamilton--connected if  any two different vertices in different parts can be connected by a Hamilton path.
\end{definition}
Let $Q_n^t $ $(2\leq t\leq \frac{n+1}{2})$  be the graph obtained from $K_{n,n}$ by deleting all edges
of one subgraph $K_{t-1, n-t}$.  It is obvious that  $Q_n^t $ is weakly Hamilton--connected.

Let $ A(G)$ and $D(G)$ be the adjacency matrix and degree matrix of $G$, respectively. The largest eigenvalue of $A(G)$ is called the \emph{spectral radius} of $G$, denoted by $\rho(G)$. Let $Q(G)=A(G)+D(G)$ be the signless Lapalacian matrix of $G$. The largest eigenvalue of $Q(G)$ is called the \emph{signless Lapalacian spectral radius} of $G$, denoted by $q(G)$.

The problem that a graph is Hamilton or not has attracted many interests (see \cite{lesM1963}, \cite{lesM2012}, \cite{lesN.L2017} and the references therein). In Sections 2-4, we present some degrees, number of edges, and spectral radius conditions for a simple balanced bipartite graph to be weakly Hamilton--connected, respectively.

\section{Degrees and weakly Hamilton--connected bipartite graphs}
We first state a known consequence as our tool which has been used to prove a simple graph to be Hamilton--connected.
\begin{lemma}(Berge\cite{lesB1976})\label{le1}
Let $\mathscr H$ be a class of simple graphs of order $n$ satisfying the following conditions:\\
(1) If $G\in \mathscr H$, each edge of $G$ is contained in some Hamilton cycle.\\
(2) If $G\in \mathscr H$, the graph $G'$ obtained from $G$ by adding any new edge also belongs to $\mathscr H$.  \\
Then $G$ is Hamilton--connected.
\end{lemma}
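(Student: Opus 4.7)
The plan is to show that each graph $G \in \mathscr H$ is Hamilton--connected, i.e., that for every pair of distinct vertices $u, v \in V(G)$ there is a Hamilton path from $u$ to $v$. The central observation I would rely on is that deleting one edge of a Hamilton cycle produces a Hamilton path whose endpoints are exactly the two endpoints of the deleted edge; this lets me convert a statement of the form ``the edge $uv$ lies in some Hamilton cycle'' into ``there is a Hamilton path joining $u$ and $v$''.

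First I would fix an arbitrary $G \in \mathscr H$ and arbitrary distinct $u, v \in V(G)$, and split into two cases depending on whether $uv \in E(G)$. In the easy case $uv \in E(G)$, condition (1) applied to $G$ itself yields a Hamilton cycle $C$ of $G$ containing $uv$, and then $C - uv$ is a Hamilton $(u,v)$-path in $G$, as desired.

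The main case is $uv \notin E(G)$. Here I would invoke condition (2) to form $G' = G + uv$, which again lies in $\mathscr H$; applying condition (1) to $G'$ produces a Hamilton cycle $C'$ of $G'$ that contains the newly added edge $uv$. Deleting $uv$ from $C'$ yields a Hamilton path of $G'$ whose edge set does \emph{not} use $uv$, so this path is already a subgraph of $G$ and joins $u$ to $v$. Both cases are now handled.

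There is essentially no obstacle in this argument; the proof is a short case analysis, and the only delicate point is choosing the correct graph to which condition (1) is applied (namely $G$ in the first case, $G'$ in the second). Conditions (1) and (2) are precisely tailored so that the edge $uv$ one wishes to ``split open'' is always present in a Hamilton cycle of some member of $\mathscr H$, and deleting it returns a Hamilton path that lives in the original $G$.
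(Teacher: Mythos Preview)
Your argument is correct and is essentially the standard proof of this lemma. Note, however, that the paper does not supply its own proof of Lemma~\ref{le1}; it is quoted as a known result from Berge~\cite{lesB1976} and used as a black box, so there is no proof in the paper to compare against. Your two-case analysis---applying condition~(1) directly when $uv\in E(G)$, and first invoking condition~(2) to pass to $G+uv$ when $uv\notin E(G)$, then deleting $uv$ from the resulting Hamilton cycle---is exactly the intended mechanism and requires no further justification.
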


Note  that simple balanced bipartite graphs are also simple graphs. By Lemma \ref{le1}, the following corollary holds.
\begin{corollary}\label{co1}
Let $\mathscr B$ be a class of simple balanced bipartite graphs of order $2n$ satisfying the following conditions:\\
(1) If $G\in \mathscr B$, each edge of $G$ is contained in some Hamilton cycle.\\
(2) If $G\in \mathscr B$, the graph $G'$ obtained from $G$ by adding any new edge also belongs to $\mathscr B$.  \\
Then $G$ is weakly Hamilton--connected.
\end{corollary}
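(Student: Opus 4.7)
The argument is a direct bipartite adaptation of Berge's scheme behind Lemma~\ref{le1}, and the two hypotheses are tailored so that the same cycle--cutting trick goes through. To verify that $G$ is weakly Hamilton--connected, I fix arbitrary vertices $x\in X$ and $y\in Y$ and aim to produce a Hamilton path of $G$ with endpoints $x$ and $y$. The plan splits into two cases depending on whether $xy$ is already an edge of $G$.

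The first (and easier) step handles the case $xy\in E(G)$: hypothesis~(1) applied to the edge $xy$ supplies a Hamilton cycle $C$ containing $xy$, and the path $C-xy$ is then a Hamilton path of $G$ from $x$ to $y$. This case uses only hypothesis~(1) and needs no further work.

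The second step treats the case $xy\notin E(G)$. Here I plan to add the missing edge to obtain $G'=G+xy$ and then rerun the argument above inside $G'$. The key thing to check is that $G'$ is still admissible for hypothesis~(2); since $x$ and $y$ lie in different parts, $G'$ remains a simple balanced bipartite graph on $2n$ vertices with the same bipartition $(X,Y)$, so (2) does apply and $G'\in\mathscr B$. Hypothesis~(1) then yields a Hamilton cycle $C'$ of $G'$ using the edge $xy$, and $C'-xy$ is a Hamilton path from $x$ to $y$ that only uses edges of $G$, because $xy$ was the unique edge of $G'$ not in $G$. Since $x$ and $y$ were arbitrary, Definition~\ref{de1} is satisfied and $G$ is weakly Hamilton--connected.

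The main obstacle I would flag is interpretational rather than technical: hypothesis~(2) must be read in the bipartite sense, i.e.\ ``new edge'' refers to an edge of the quasi--complement $\widehat{G}$, since adding a within--part edge would take us outside the class of bipartite graphs (and thus outside $\mathscr B$). Once that convention is fixed, the added edge $xy$ in the second case is automatically of the allowed type, and the proof goes through with no additional machinery beyond Lemma~\ref{le1}'s underlying idea.
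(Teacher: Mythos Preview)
Your proof is correct and follows exactly the Berge cycle--cutting idea that the paper invokes: the paper's own argument is just the one-line remark that balanced bipartite graphs are simple graphs, so Lemma~\ref{le1} applies, whereas you have spelled out the two cases and, in particular, made explicit why the added edge $xy$ is of the permitted (cross-part) type. Your interpretational caveat about hypothesis~(2) is well taken and is precisely what makes the specialization to the bipartite setting go through.
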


Next, by Corollary 2.1, we obtain Theorem \ref{th1} and Lemma \ref{le2}.
\begin{theorem}\label{th1}
A bipartite graph $G=G(X,Y,E)$ with $X=\{x_1,x_2, \dots, x_n\}$ and $Y=\{y_1,y_2, \dots, y_n\}$ is weakly Hamilton--connected if $G$ satisfies the following property: for any nonempty subset $\Gamma=\{x_i|d(x_i)\leq k\}$ where $|\Gamma|=k-1$ and $k\leq \frac{n+1}{2}$, every vertex $y_i$ with $d(y_i)\leq n-k+1$ is adjacent to at least one vertex in $\Gamma$. And the same result holds while $x_i$ and $y_i$ are interchanged.
\end{theorem}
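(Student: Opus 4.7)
The plan is to apply Corollary~\ref{co1} to the class $\mathscr{B}$ of simple balanced bipartite graphs of order $2n$ satisfying the stated degree hypothesis. It therefore suffices to verify: (1) every edge of $G\in\mathscr{B}$ lies on some Hamilton cycle, and (2) $\mathscr{B}$ is closed under edge addition.

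For condition (2) I would reason by contradiction. Suppose $G\in\mathscr{B}$ but $G'=G+x_ay_b$ fails the hypothesis at some threshold $k\le(n+1)/2$. Write $\Gamma_k=\{x_i:d_G(x_i)\le k\}$ and $\Gamma'_k=\{x_i:d_{G'}(x_i)\le k\}$. Since edge addition only raises degrees, one has either $\Gamma'_k=\Gamma_k$, or $\Gamma'_k=\Gamma_k\setminus\{x_a\}$ (the latter only when $d_G(x_a)=k$). In the coincident case, the original hypothesis applied to $G$ at threshold $k$ produces a $G$-neighbour of the offending $y_j$ in $\Gamma'_k$, contradicting the violation in $G'$. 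In the shifted case $|\Gamma_k|=k$, and I would pass to threshold $k+1$ in $G$, where $|\Gamma_{k+1}|=k$ whenever no $x_i$ has $G$-degree exactly $k+1$; the hypothesis at threshold $k+1$ then yields a $G$-neighbour of $y_j$ in $\Gamma_{k+1}=\Gamma'_k\cup\{x_a\}$. The corner cases---notably $y_j\ne y_b$ with $d_G(y_j)=n-k+1$, or $k$ already saturating $\lfloor(n+1)/2\rfloor$---are handled by invoking the symmetric hypothesis with $X$ and $Y$ interchanged.

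For condition (1), suppose for contradiction that some edge $e=x_1y_1$ of $G$ is not contained in any Hamilton cycle. Among all cycles containing $e$ pick one of maximum length, say $C$; equivalently, fix a longest path $P=u_1u_2\cdots u_{2m}$ starting with the edge $e$. If $C$ is not Hamiltonian, choose a vertex $v\notin V(C)$ and, using the bipartition together with maximality, bound the neighbours of $v$ and of the endpoints of $P$ on each side of $C$. Selecting $k$ so that the low-degree vertices on one side form a set of cardinality exactly $k-1$ matching these neighbour constraints, the hypothesis supplies an extra crossing edge whose placement permits a rotation or a standard bipartite exchange producing a longer cycle through $e$, contradicting maximality.

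The principal obstacle is condition (2): the hypothesis is anchored to the exact cardinality $|\Gamma|=k-1$, so a single edge addition can destroy the witness at threshold $k$ while simultaneously creating a new one at threshold $k+1$. Tracking that the shifted threshold stays inside the admissible range $k\le(n+1)/2$ and that $y_j$ retains a degree low enough for the hypothesis to bite---particularly through the interplay with the symmetric hypothesis---is where the bulk of the case analysis will lie.
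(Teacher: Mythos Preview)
Your strategy diverges from the paper's in a way that creates real difficulties. You try to apply Corollary~\ref{co1} \emph{directly} to the class $\mathscr{B}$ of graphs satisfying the degree hypothesis, which forces you to prove both closure under edge addition and the Hamilton-cycle property for every member of $\mathscr{B}$. The paper does neither. Instead it argues by contradiction: assuming $G$ is not weakly Hamilton--connected, it passes to a \emph{maximal} non--weakly Hamilton--connected supergraph $G^*\supseteq G$ and works entirely inside $G^*$. Maximality of $G^*$ replaces your condition~(2) for free---adding any edge to $G^*$ produces a weakly Hamilton--connected graph---and Corollary~\ref{co1} is invoked only in contrapositive to extract a single edge $e$ of $G^*$ lying on no Hamilton cycle. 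Then, choosing nonadjacent $x\in X$, $y\in Y$ in $G^*$ with $d_{G^*}(x)+d_{G^*}(y)$ maximal, the graph $G^*+xy$ is weakly Hamilton--connected, so there is a Hamilton path $P$ from $x$ to $y$ through $e$. The standard crossing argument along $P$ yields $d_{G^*}(x)+d_{G^*}(y)\le n+1$, and the maximality of the degree sum produces $k-1$ vertices of $X$-degree at most $k$ all nonadjacent to $y$, contradicting the hypothesis (which transfers from $G$ to $G^*$ since degrees only increase).

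Your plan runs into trouble precisely where the paper's maximal-supergraph trick pays off. For condition~(2), the threshold-shifting analysis you sketch is fragile: adding an edge can simultaneously move $|\Gamma_k|$ and the degree of the offending $y_j$, and your proposed escape to threshold $k+1$ requires both that no $x_i$ has degree exactly $k+1$ and that $k+1\le(n+1)/2$---neither of which you control. Appealing to ``the symmetric hypothesis with $X$ and $Y$ interchanged'' does not obviously rescue these corner cases, and you yourself flag this as unresolved. For condition~(1), your longest-cycle-through-$e$ argument is only a gesture: you never explain how the hypothesis, which is about \emph{global} degree counts at exact cardinality $k-1$, converts into a \emph{local} crossing edge that lengthens the cycle. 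The paper avoids all of this by never needing the hypothesis to survive edge addition and by obtaining the Hamilton path directly from maximality rather than from a longest-cycle extension.
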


\begin{proof}
[\bf Proof]By contradiction. Suppose that $G$ is not weakly Hamilton--connected, then $G$ is contained in a maximal non--weakly Hamilton--connected bipartite graph $G^*$
( It means the addition of any new edge to $G^*$ makes a weakly Hamilton--connected bipartite graph).
 By Corollary \ref{co1}, there is an edge $e$ in $G^*$, which is not contained in any Hamilton cycle of $G^*$.
 Since $G^*$ is not weakly Hamilton--connected, it is  not a complete  bipartite graph. Choose two nonadjacent vertices
 $x\in X, y\in Y$ in $G^*$ with $d_{G^*}(x)+d_{G^*}(y)$ as large as possible.
Since $G^*+xy$ is weakly Hamilton--connected, there is a Hamilton cycle containing the edges $e$ and $xy$.
Then there exists a Hamilton path $P$ in $G^*$ containing $e$ with the form $xy_1x_2y_2\cdots x_ny$ $(x=x_1,y=y_n)$.

Let $I=\{i |1\leq i \leq n-1, xy_i\in E(G^*) \quad and  \quad x_iy_i\neq e\}$, then
\begin{equation}\label{eq1}
|I|\geq d_{G^*}(x)-1.
\end{equation}

For arbitrary $i\in I$, it must have $x_iy\notin E(G^*)$. Otherwise, there exists a Hamilton cycle $y_ix\overrightarrow{P} x_iy \overleftarrow{P}y_i$ containing $e$. Thus
\begin{equation}\label{eq2}
n-d_{G^*}(y)\geq |I|.
\end{equation}

By (\ref{eq1}) and (\ref{eq2}),
\begin{equation}\label{eq3}
d_{G^*}(x)+d_{G^*}(y)\leq n+1.
\end{equation}

If $d_{G^*}(x)\leq d_{G^*}(y)$, then $d_{G^*}(x)\leq \frac{n+1}{2}$. If $d_{G^*}(x)\geq d_{G^*}(y)$, then $d_{G^*}(y)\leq \frac{n+1}{2}$. Without loss of generality, assume that $d_{G^*}(x)\leq d_{G^*}(y)$. Let $k=d_{G^*}(x)$. By (\ref{eq3}), $ d_{G^*}(y)\leq n-k+1$.

Since for arbitrary $i\in I$, $x_iy\notin G^*$, and $|I|\geq d_{G^*}(x)-1=k-1$,
there are at least $k-1$ such vertices $x_i$  not adjacent to $y$.
By the choice of $x$ with $d_{G^*}(x)+d_{G^*}(y)$ as large as possible,
 we get $d_{G^*}(x_i)\leq d_{G^*}(x)=k$. Thus there are at least $k-1$ vertices with degree no more than $k$ in  $X$ not adjacent to $y$, a contradiction.

Hence, $G$ is weakly Hamilton--connected.
\end{proof}

By the proof of Theorem \ref{th1},  the following two corollaries hold:

\begin{corollary}\label{co2}
A bipartite graph $G=G(X,Y,E)$ with $|X|=|Y|=n$ is weakly Hamilton--connected if $d(x)+d(y)\geq n+2$ for any two nonadjacent vertices $x\in X$ and $y\in Y$.
\end{corollary}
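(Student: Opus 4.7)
The plan is to argue by contradiction, piggy--backing on the machinery already built in the proof of Theorem~\ref{th1}. Suppose $G$ is not weakly Hamilton--connected. Then $G$ is contained in some maximal non--weakly Hamilton--connected balanced bipartite graph $G^{*}$ on the same bipartition $(X,Y)$, in the sense that adding any missing edge to $G^{*}$ produces a weakly Hamilton--connected graph. The first thing to verify is that the Ore--type hypothesis lifts from $G$ to $G^{*}$: if $x\in X$ and $y\in Y$ are nonadjacent in $G^{*}$, they are \emph{a fortiori} nonadjacent in $G$, so the assumption gives $d_{G}(x)+d_{G}(y)\geq n+2$, and since $G\subseteq G^{*}$ we get $d_{G^{*}}(x)+d_{G^{*}}(y)\geq n+2$ as well.

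Next, I apply Corollary~\ref{co1} to the maximal non--weakly Hamilton--connected graph $G^{*}$: there must exist an edge $e\in E(G^{*})$ that lies on no Hamilton cycle of $G^{*}$, and $G^{*}$ cannot be complete bipartite. Choose a nonadjacent pair $x\in X$, $y\in Y$ in $G^{*}$ maximizing $d_{G^{*}}(x)+d_{G^{*}}(y)$. Since $G^{*}+xy$ is weakly Hamilton--connected, it contains a Hamilton cycle through both $e$ and $xy$, giving a Hamilton path $xy_{1}x_{2}y_{2}\cdots x_{n}y$ of $G^{*}$ through $e$. The rotation/crossing argument of Theorem~\ref{th1} then yields the two inequalities $|I|\geq d_{G^{*}}(x)-1$ and $n-d_{G^{*}}(y)\geq |I|$, and therefore
\begin{equation*}
d_{G^{*}}(x)+d_{G^{*}}(y)\leq n+1.
\end{equation*}

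This directly contradicts the transferred hypothesis $d_{G^{*}}(x)+d_{G^{*}}(y)\geq n+2$ at the chosen pair, so $G$ must be weakly Hamilton--connected. The only genuinely new step beyond Theorem~\ref{th1} is the observation that the degree--sum hypothesis is preserved under the passage to the saturated graph $G^{*}$, which is where I expect a reader to pause; the rest is a verbatim reuse of inequality~(\ref{eq3}) from the preceding proof, so no additional obstacle arises.
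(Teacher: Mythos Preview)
Your argument is correct and is precisely the intended reading of the paper's one-line justification ``By the proof of Theorem~\ref{th1}'': you re-run the saturation and crossing argument to reach inequality~(\ref{eq3}), and then observe that the Ore-type hypothesis transfers from $G$ to $G^{*}$ to produce the contradiction. The explicit verification that the degree-sum condition lifts to $G^{*}$ is the right thing to spell out, and nothing further is needed.
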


\begin{corollary}\label{co3}
Let $G=G(X,Y,E)$ be a bipartite graph of order $2n$. The graph $G$ satisfies that the number of vertices $x$ in $X$ such that $d(x)\leq k$ is less then $k-1$, where $1\leq k\leq \frac{n+1}{2}$, and similar conditions holds for  replacing  $x$ by $y$, then $G$ is weakly Hamilton--connected.
\end{corollary}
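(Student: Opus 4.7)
The plan is to derive Corollary \ref{co3} as an immediate consequence of Theorem \ref{th1} by showing that its hypothesis is vacuously satisfied under the degree-count assumption of the corollary. Fix any $k$ with $2 \leq k \leq \frac{n+1}{2}$. The corollary's assumption says that fewer than $k-1$ vertices of $X$ have degree at most $k$, i.e.\ at most $k-2$ such vertices exist. Consequently, no nonempty subset $\Gamma \subseteq \{x_i \in X : d(x_i) \leq k\}$ of size $k-1$ can be formed for this $k$. The quantified statement in Theorem \ref{th1} — namely, ``for every nonempty $\Gamma$ of this form, every $y_i$ with $d(y_i) \leq n-k+1$ is adjacent to at least one vertex of $\Gamma$'' — is therefore vacuously true.

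Since the argument above is valid for every $k$ in the relevant range, the first half of the hypothesis of Theorem \ref{th1} holds. Invoking the analogous count condition on $Y$ stated in the corollary, the same vacuous argument shows that the symmetric half of the hypothesis (with $x_i$ and $y_i$ interchanged) also holds. Hence all conditions of Theorem \ref{th1} are met, and that theorem delivers the desired conclusion that $G$ is weakly Hamilton--connected.

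I do not anticipate any significant obstacle in writing this out, as the corollary is essentially a vacuous-hypothesis specialization of Theorem \ref{th1}. The only minor point worth a sentence in the final write-up is that the stipulation ``$\Gamma$ is nonempty'' together with the size requirement $|\Gamma| = k-1$ means the content is genuine only for $k \geq 2$; the case $k=1$ in the corollary's range carries no extra content and need not be treated separately. With this remark in place, the proof reduces to a one-line application of Theorem \ref{th1}.
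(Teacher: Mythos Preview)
Your proposal is correct and matches the paper's own treatment: the paper does not give a separate proof but simply states that Corollary~\ref{co3} follows from (the proof of) Theorem~\ref{th1}, which is precisely the vacuous-hypothesis specialization you spell out. Your remark about the $k=1$ case is a sensible clarification, though not strictly needed.
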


Similar to the  proof of  Theorem \ref{th1}, we have  the following lemma.

\begin{lemma}\label{le2}
Let $G=G(X,Y,E)$ be a bipartite graph with $X=\{x_1, \ldots, x_n \}$,  $Y=\{y_1, \ldots, y_n \}$, and $|X|=|Y|=n$. If $d(x)+d(y)\geq n+2$ for any two nonadjacent vertices $x\in X$ and $y\in Y$, then $G$ is weakly Hamilton--connected if and only if $G+xy$ is weakly Hamilton-connected.
\end{lemma}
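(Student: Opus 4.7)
The $(\Rightarrow)$ direction is immediate: every Hamilton path of $G$ is still a Hamilton path of the supergraph $G+xy$, so if $G$ is weakly Hamilton--connected then so is $G+xy$.

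For the $(\Leftarrow)$ direction my plan is to argue by contradiction, closely mirroring the proof of Theorem \ref{th1}. Suppose $G+xy$ is weakly Hamilton--connected but $G$ is not, and enlarge $G$ to a maximal non--weakly--Hamilton--connected balanced bipartite graph $G^*$ on $2n$ vertices. By maximality, joining any missing edge to $G^*$ produces a weakly Hamilton--connected graph; in particular, since $G+xy$ is already weakly Hamilton--connected and adding edges preserves the property, we must have $xy\notin E(G^*)$ (otherwise $G^*\supseteq G+xy$ would itself be weakly Hamilton--connected). Next I apply Corollary \ref{co1} to the class consisting of $G^*$ together with all weakly Hamilton--connected balanced bipartite graphs of order $2n$. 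This class is closed under edge-addition by the maximality of $G^*$, and every weakly Hamilton--connected graph has each of its edges in some Hamilton cycle (concatenate the Hamilton path between the endpoints of the edge with the edge itself). Hence the corollary forces some edge $e$ of $G^*$ to lie in no Hamilton cycle of $G^*$.

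Since $G^*+xy$ is weakly Hamilton--connected, $e$ lies in some Hamilton cycle of $G^*+xy$, and that cycle must use the edge $xy$---otherwise it would be a Hamilton cycle of $G^*$ through $e$. Removing $xy$ yields a Hamilton path $P$ of $G^*$ from $x$ to $y$ containing $e$, which I write as $x=x_1,y_1,x_2,y_2,\ldots,x_n,y_n=y$. From here the argument of Theorem \ref{th1} applies essentially unchanged: with $I=\{i:1\leq i\leq n-1,\ xy_i\in E(G^*),\ x_iy_i\neq e\}$ one has $|I|\geq d_{G^*}(x)-1$, and if any $x_i$ with $i\in I$ satisfied $x_iy\in E(G^*)$ then the cycle $y_ix\overrightarrow{P}x_iy\overleftarrow{P}y_i$ would be a Hamilton cycle of $G^*$ through $e$, contradicting the choice of $e$. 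Consequently $n-d_{G^*}(y)\geq |I|\geq d_{G^*}(x)-1$, so $d_{G^*}(x)+d_{G^*}(y)\leq n+1$. Because $G\subseteq G^*$, this forces $d_G(x)+d_G(y)\leq n+1$, contradicting the hypothesis $d_G(x)+d_G(y)\geq n+2$.

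The main obstacle is really only the setup---first passing to the maximal extension $G^*$ and observing $xy\notin E(G^*)$, and second verifying that the class of graphs fed into Corollary \ref{co1} is genuinely closed under edge-addition, so that an edge $e$ of $G^*$ lying in no Hamilton cycle of $G^*$ really does drop out. After these two bookkeeping points, the rotation already carried out in Theorem \ref{th1} is reused verbatim, and no new combinatorial idea is required.
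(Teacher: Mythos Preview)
Your proposal is correct and follows essentially the same route as the paper's proof: pass to a maximal non--weakly--Hamilton--connected supergraph $G^*$, use Corollary~\ref{co1} to extract an edge $e$ lying on no Hamilton cycle of $G^*$, obtain a Hamilton $x$--$y$ path through $e$ from the weak Hamilton--connectedness of $G^*+xy$, and then run the rotation/counting argument to force $d_{G^*}(x)+d_{G^*}(y)\le n+1$. Your write-up is in fact more explicit than the paper's on two points it leaves implicit---namely, why $xy\notin E(G^*)$ and exactly which class $\mathscr B$ is being fed into Corollary~\ref{co1}---but no genuinely different idea is involved.
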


\begin{proof}
[\bf Proof] Sufficiency is obvious.

\qquad Necessity is completed by contradiction. Suppose that $G$ is not weakly Hamilton--connected. Adding some suitable edges to $G$, we can get a maximal non--weakly Hamilton--connected bipartite graph $G^*$. By Corollary \ref{co1}, there is an edge $e$ of $G^*$  not contained in any Hamilton cycle. Since $G+xy$ is weakly Hamilton--connected,  $G^*+xy$ is weakly Hamilton--connected. For $G^*$, there is a Hamilton path $P$ in $G^*$ containing $e$ with the form $xy_1x_2y_2\cdots x_ny (x=x_1, y=y_n)$.

Let $I=\{i |xy_i\in E(G^*) \quad and \quad x_iy_i\neq e\}$, then
\begin{equation}\label{equ1}
|I|\geq d_{G^*}(x)-1.
\end{equation}
For arbitrary $i\in I$,  $x_iy\notin G^*$ holds. Otherwise, there exists a Hamilton cycle $y_ix\overrightarrow{P} x_iy \overleftarrow{P}y_i$ containing $e$.
Thus
\begin{equation}\label{equ2}
n-d_{G^*}(y)\geq |I|.
\end{equation}
By (\ref{equ1}) and (\ref{equ2}),
\begin{equation}\label{equ3}
d_{G^*}(x)+d_{G^*}(y)\leq n+1,
\end{equation}
which leads to a contradiction.

Hence, $G$ is weakly Hamilton--connected.
\end{proof}

\begin{definition}\label{de2}

The $B_{n+2}$--closure of a balanced bipartite graph $G$ of order $2n$, denoted by $cl_{B_{n+2}}(G)$,
is obtained from $G$ by recursively joining pairs of nonadjacent vertices in different parts,
whose degree sum is at least $n+2$ until no such pair remains.
\end{definition}
By  Lemma \ref{le2}, each time an edge such that  vertices in different parts and degree sum  at least $n+2$ is added to $G$ in the formation of
$cl_{B_{n+2}}(G)$. Then we have  the following result.

\begin{theorem}\label{th3}
A balanced bipartite graph $G=G(X,Y,E)$ of order $2n$ is weakly Hamilton--connected if and only if $cl_{B_{n+2}}(G)$ is weakly Hamilton-connected.
\end{theorem}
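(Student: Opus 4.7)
The plan is to iterate Lemma \ref{le2} along the sequence of edge insertions that defines the closure. The forward direction (if $G$ is weakly Hamilton--connected, then so is $cl_{B_{n+2}}(G)$) is immediate: every Hamilton path in $G$ between two vertices in different parts remains a Hamilton path in any supergraph on the same vertex set, and $cl_{B_{n+2}}(G)$ is obtained from $G$ only by adding edges. So I would dispose of this direction in one sentence.

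For the substantive direction, I would list the edges added during the closure construction in the order they are inserted, say $e_1=x_1y_1, e_2=x_2y_2, \ldots, e_k=x_ky_k$, producing a chain of graphs
$$G=G_0\subsetneq G_1\subsetneq\cdots\subsetneq G_k=cl_{B_{n+2}}(G), \qquad G_i=G_{i-1}+e_i.$$
By the definition of the $B_{n+2}$--closure, at the moment $e_i$ is inserted the vertices $x_i\in X$ and $y_i\in Y$ are nonadjacent in $G_{i-1}$ and satisfy $d_{G_{i-1}}(x_i)+d_{G_{i-1}}(y_i)\geq n+2$. Applying Lemma \ref{le2} to the bipartite graph $G_{i-1}$ together with the candidate edge $e_i$ then yields that $G_{i-1}$ is weakly Hamilton--connected if and only if $G_i$ is weakly Hamilton--connected. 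Concatenating these $k$ equivalences gives that $G=G_0$ is weakly Hamilton--connected if and only if $cl_{B_{n+2}}(G)=G_k$ is, which is precisely the required conclusion.

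The main subtlety I expect to address is that the degree--sum hypothesis in Lemma \ref{le2} must be verified inside the \emph{current} intermediate graph $G_{i-1}$ rather than in the original $G$; this is exactly the condition under which the closure procedure chose to insert $e_i$, and since degrees only grow as edges are added the condition cannot be spoiled by subsequent steps, so the verification is built into the construction. A secondary point, not strictly needed for the stated equivalence but natural to mention, is that $cl_{B_{n+2}}(G)$ is independent of the order of insertions; this can be settled by the standard Bondy--Chv\'atal exchange argument, observing that the first edge appearing in one ordering's closure but not in another would still satisfy the degree--sum condition in the latter, contradicting maximality.
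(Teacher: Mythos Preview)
Your proposal is correct and follows exactly the paper's approach: the paper derives Theorem~\ref{th3} directly from Lemma~\ref{le2} by noting that each edge added in forming $cl_{B_{n+2}}(G)$ satisfies the degree--sum hypothesis of that lemma, so weak Hamilton--connectedness is preserved at every step. Your write-up simply makes the iteration explicit (and adds the optional remark on well-definedness of the closure), but the argument is the same.
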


Because  a complete bipartite graph is weakly Hamilton--connected and by Theorem \ref{th3}, we get the following corollary.
\begin{corollary}\label{co4}
Let $G=G(X,Y,E)$ be a balanced bipartite graph with $|X|=|Y|=n$. If the $B_{n+2}$--closure of $G$ is a complete bipartite graph, then $G$ is weakly Hamilton--connected.
\end{corollary}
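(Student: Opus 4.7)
The plan is straightforward: this corollary is essentially a direct consequence of Theorem \ref{th3}, provided we confirm the (easy) fact that a balanced complete bipartite graph is itself weakly Hamilton--connected. So the proof reduces to two small observations chained together.

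First I would verify that $K_{n,n}$ is weakly Hamilton--connected. Given the bipartition $(X,Y)$ with $|X|=|Y|=n$ and arbitrary $x\in X$, $y\in Y$, I would label the remaining vertices of $X$ as $x_2,\ldots,x_n$ and the remaining vertices of $Y$ as $y_1,\ldots,y_{n-1}$, and exhibit the explicit Hamilton path
\[
x\,y_1\,x_2\,y_2\,x_3\,\cdots\,x_n\,y,
\]
which is a valid path since every $X$--$Y$ pair is an edge of $K_{n,n}$. Thus $K_{n,n}$ satisfies Definition \ref{de1}.

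Next, I would invoke Theorem \ref{th3}: if $cl_{B_{n+2}}(G)$ is a complete bipartite graph, then $cl_{B_{n+2}}(G)=K_{n,n}$ (since the closure has the same vertex set and bipartition as $G$), and by the first step $K_{n,n}$ is weakly Hamilton--connected. Applying the ``if'' direction of Theorem \ref{th3} then yields that $G$ is weakly Hamilton--connected, completing the proof.

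There is no real obstacle here; the only conceptual point is recognising that the closure operation in Definition \ref{de2} preserves the vertex set and the bipartition, so saying ``the closure is a complete bipartite graph'' forces it to be precisely $K_{n,n}$. Everything else is an immediate appeal to the already--established Theorem \ref{th3}.
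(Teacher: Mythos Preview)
Your proposal is correct and matches the paper's approach exactly: the paper simply remarks that a complete bipartite graph is weakly Hamilton--connected and then invokes Theorem~\ref{th3}. Your explicit Hamilton path in $K_{n,n}$ just spells out the one-line observation the paper leaves implicit.
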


Next, we give another sufficient degree condition to prove a balanced bipartite graph is weakly Hamilton--connected.
And Theorem \ref{th4} is used to prove Theorem \ref{th5} in Section 3.

\begin{theorem}\label{th4}
Let $G=G(X,Y,E)$ be a balanced bipartite graph of order $2n$ and degree sequence $(d_1,d_2,\dots,d_{2n})$. Suppose that there is no integer $2\leq k\leq \frac{n+1}{2}$ such that $d_{k-1}\leq k$ and $d_{n-1}\leq n-k+1$, then $G$ is weakly Hamilton--connected.
\end{theorem}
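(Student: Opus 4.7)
The plan is to argue the contrapositive by a Chv\'atal--type closure argument, in the spirit of the proof of Theorem \ref{th1}. Assume $G$ is not weakly Hamilton--connected. By Theorem \ref{th3} the closure $G^{*}:=cl_{B_{n+2}}(G)$ inherits this property, and in particular $G^{*}\neq K_{n,n}$. Choose nonadjacent $x\in X$, $y\in Y$ in $G^{*}$ maximizing $d_{G^{*}}(x)+d_{G^{*}}(y)$; since $G^{*}$ is $B_{n+2}$--closed, this sum is at most $n+1$. Assume without loss of generality $d_{G^{*}}(x)\leq d_{G^{*}}(y)$ and set $k:=d_{G^{*}}(x)$, so $k\leq\frac{n+1}{2}$ and $d_{G^{*}}(y)\leq n-k+1$.

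The next step is to exploit the maximality of the chosen pair to count low-degree vertices. Every non-neighbour $x'\in X$ of $y$ forms a nonadjacent pair with $y$ in $G^{*}$, so $d_{G^{*}}(x')\leq d_{G^{*}}(x)=k$; there are at least $n-d_{G^{*}}(y)\geq k-1$ such vertices $x'$ (including $x$ itself), giving at least $k-1$ vertices of $X$ of $G^{*}$-degree at most $k$. Symmetrically, the $n-k$ non-neighbours of $x$ in $Y$ each have $G^{*}$-degree at most $d_{G^{*}}(y)\leq n-k+1$. Since $G\subseteq G^{*}$ these bounds transfer to $G$, and because $k\leq n-k+1$ the $k-1$ low-degree vertices in $X$ also satisfy $d_{G}(\cdot)\leq n-k+1$. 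Altogether $G$ contains at least $k-1$ vertices of degree at most $k$ and at least $(k-1)+(n-k)=n-1$ vertices of degree at most $n-k+1$, so $d_{k-1}\leq k$ and $d_{n-1}\leq n-k+1$ hold simultaneously.

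If $k\geq 2$, this $k$ lies in $\left[2,\frac{n+1}{2}\right]$ and directly contradicts the hypothesis. The one delicate point I expect is the boundary case $k=1$: then $d_{G^{*}}(x)=1$, and the index produced by the closure argument falls outside the allowed range, so it must be upgraded to $k=2$ by hand. In that case $d_{G}(x)\leq 1$, hence $d_{1}\leq 1\leq 2$; moreover each of the $n-1$ vertices of $Y$ non-adjacent to $x$ in $G$ has degree at most $n-1$, producing at least $n-1$ vertices of degree at most $n-1=n-2+1$. For $n\geq 3$ this places $k=2$ in the required range and again contradicts the hypothesis. This $k=1$ patch is the only step that I expect to require separate care; the remainder is essentially the same counting that underlies Theorem \ref{th1}, now read off the ordered degree sequence of $G$.
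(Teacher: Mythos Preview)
Your argument is essentially identical to the paper's: both pass to the closure $G'=cl_{B_{n+2}}(G)$, select a nonadjacent pair $x,y$ of maximum degree sum, set $k=\min\{d_{G'}(x),d_{G'}(y)\}$, and count the non-neighbours of $y$ in $X$ and of $x$ in $Y$ to force $d_{k-1}\leq k$ and $d_{n-1}\leq n-k+1$. The only place you go beyond the paper is the explicit handling of the boundary case $k=1$ (upgrading it to $k=2$), which the paper's proof tacitly skips.
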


\begin{proof}
[\bf Proof]Let $G'$ be the $B_{n+2}$--closure of $G$, i.e.,  $G'=cl_{B_{n+2}}(G)$.\\
{\bf Claim:} $G'$ is a  complete bipartite graph.

By contradiction. If $G'$ is not complete, then we choose two nonadjacent vertices $x\in X, y\in Y$ with $d_{G'}(x)+d_{G'}(y)$ as large as possible.
By the definition of $G'$, $d_{G'}(x)+d_{G'}(y)\leq n+1$.

There are two cases:

Case 1:  $d_{G'}(x)\leq d_{G'}(y)$, let $k=d_{G'}(x)$.

Denote
\begin{equation*}
X_1=\{ x_i|x_iy\notin E(G') \}, \qquad Y_1=\{ y_i|xy_i\notin E(G') \},
\end{equation*}
then
\begin{equation*}
|X_1|=n-d_{G'}(y),\qquad |Y_1|=n-d_{G'}(x).
\end{equation*}
Thus
\begin{equation*}
|Y_1|=n-k,\qquad |X_1|\geq n-(n+1-d_{G'}(x))=k-1.
\end{equation*}
That is, there are $n-k$ vertices in $Y$ not adjacent to $x$  and at least $k-1$ vertices in $X$ not adjacent to $y$.
By the choice of $x$, for any $x_i\in X_1$, $d_{G'}(x_i)\leq d_{G'}(x)$.
 It implies that there are at least $k-1$ vertices with degree no more than $k$ in $G'$.

Since $d_{G'}(x)+d_{G'}(y)\leq n+1$ and $k=d_{G'}(x)$, $d_{G'}(y)\leq n-k+1$.
By the choice of $y$, for any $y_i\in Y_1$, $d_{G'}(y_i)\leq d_{G'}(y)$.
It implies there are exactly $n-k$ vertices with degree less than or equal to $n-k+1$ in $Y_1$.
Since $d_{G'}(x)\leq d_{G'}(y)$, there are at least $n-1$ vertices with degree less than or equal to $n-k+1$ in $G'$.

Note that $G$ is a spanning subgraph of $G'$.
For any one vertex $v\in V(G)=V(G')$, $d_{G}(v)\leq d_{G'}(v)$ holds. Thus
$$d_{k-1}\leq k \quad and \quad d_{n-1}\leq n-k+1,$$
a contradiction.

Case 2: $d_{G'}(x)\geq d_{G'}(y)$, let $k=d_{G'}(y)$.

 Similarly, we have $d_{k-1}\leq k$ and $d_{n-1}\leq n-k+1$, a contradiction.

Thus $G'$ is a complete bipartite graph.

By Corollary \ref{co4}, $G$ is weakly Hamilton--connected.

The proof is finished.
\end{proof}

\section{Number of edges and weakly Hamilton--connected bipartite graphs}

In this section, we obtain  the weakly Hamilton-connected property of  bipartite graphs by the number  of edges, namely, Theorems \ref{th5} and \ref{le4}.

\begin{theorem}\label{th5}
Let $G=G(X,Y,E)$ be a bipartite graph with $X=\{x_1,x_2, \dots, x_n\}$ and $Y=\{y_1,y_2, \dots, y_n\}$. The degree sequence of $G$ is $(d_1,\dots,d_{2n})$ with $d_{2n}\geq \cdots \geq d_1 \geq k$ where $2\leq k \leq \frac{n+1}{2}$. If
\begin{equation}\label{equ7}
e(G)>\max \left\{n(n-t+1)+t(t+1)\bigg|k\leq t \leq \frac{n+1}{2} \right\},
\end{equation}
then $G$ is weakly Hamilton--connected.
\end{theorem}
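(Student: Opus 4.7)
The plan is to argue by contradiction, with Theorem~\ref{th4} as the main engine. I would begin by supposing $G$ is not weakly Hamilton--connected. Then by the contrapositive of Theorem~\ref{th4}, there exists some integer $t$ with $2\le t\le \frac{n+1}{2}$ satisfying both $d_{t-1}\le t$ and $d_{n-1}\le n-t+1$. The very first thing to check is that this $t$ falls inside the range over which the maximum in \eqref{equ7} is taken, i.e.\ that $t\ge k$. This follows because the sequence $(d_i)$ is non--decreasing, so $d_{t-1}\ge d_1\ge k$; combining with $d_{t-1}\le t$ gives $t\ge k$ at once.

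The next step is to bound $e(G)$ from above by a direct degree--sum count using the constraints just obtained. Specifically, the first $t-1$ entries $d_1,\dots,d_{t-1}$ are each at most $t$; the next $n-t$ entries $d_t,\dots,d_{n-1}$ are each at most $n-t+1$; and the remaining $n+1$ entries $d_n,\dots,d_{2n}$ are each at most $n$, since the maximum degree in a balanced bipartite graph of order $2n$ is $n$. Summing all degrees yields
\begin{equation*}
2e(G) \;=\; \sum_{i=1}^{2n} d_i \;\le\; (t-1)t + (n-t)(n-t+1) + (n+1)n,
\end{equation*}
and a routine simplification produces an upper bound of the form $n(n-t+1)+t(t+1)$ (in fact slightly tighter, but the stated bound already suffices). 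Since $k\le t\le \frac{n+1}{2}$, this quantity is bounded above by $\max\{n(n-t+1)+t(t+1)\mid k\le t\le \frac{n+1}{2}\}$, which contradicts the hypothesis on $e(G)$ and completes the proof.

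The only step that needs a moment of care is verifying that the integer $t$ produced by Theorem~\ref{th4} really lies in $[k,\frac{n+1}{2}]$, so that the hypothetical upper bound on $e(G)$ applies to that particular $t$; this is precisely where the minimum--degree hypothesis $d_1\ge k$ is used, and without it the argument would not close. Beyond this, the proof is pure bookkeeping: splitting the degree sequence into the three blocks above and summing. I do not anticipate any genuine obstacle.
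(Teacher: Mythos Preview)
Your proof is correct and follows essentially the same route as the paper's: argue by contradiction, invoke Theorem~\ref{th4} to produce an integer $t$ with $d_{t-1}\le t$, observe $t\ge k$ from $d_1\ge k$, and then bound $e(G)$ to contradict the hypothesis. The only cosmetic difference is in the edge count: you use the degree--sum identity and both conclusions $d_{t-1}\le t$, $d_{n-1}\le n-t+1$, whereas the paper partitions edges into those incident or not to the $t-1$ low--degree vertices, using only $d_{t-1}\le t$; either way one arrives at $e(G)\le n(n-t+1)+t(t-1)$, which is below the assumed threshold.
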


\begin{proof}
[\bf Proof] By contradiction.  Suppose that $G$ is not weakly Hamilton--connected. By Theorem \ref{th4}, there is an integer $t$, $k\leq t\leq \frac{n+1}{2}$ such that $d_{t-1}\leq t$. Then there are at least $t-1$ vertices $v_1,\dots,v_{t-1}$ with degree not exceeding $t$. For any $1\leq i\leq t-1$, the number of edges of $G$ which are not adjacent to $v_i$ is at most $n(n-t+1)$. And the number of edges of $G$ incident to  these $t-1$ vertices is at most $t(t-1)$. Thus  $e(G)\leq n(n-t+1)+t(t+1)$  where $k\leq t \leq \frac{n+1}{2}$, a contradiction.

Hence $G$ is weakly Hamilton--connected.
\end{proof}

\begin{remark}
Bound (\ref{equ7}) is not the best. Let $G=Q_n^t$, then $e(G)=n(n-t+1)+t(t-1)$ and $G$ is weakly Hamilton--connected.
\end{remark}

Next, we present one lemma which is a main tool to prove Theorem \ref{le4}.

\begin{lemma}\label{le3}
Let $G=(X, Y, E)$ be a balanced bipartite graph of order $2n$ and $G=cl_{B_{n+2}}(G)$. If $n\geq 2k$ and $e(G)>n(n-k)+k(k+1)$ for $k\geq 1$, then $G$ contains a complete bipartite graph of order $2n-k+1$. Furthermore, if $\delta(G)\geq k$, then $K_{n,n-k+1}\subseteq G$.
\end{lemma}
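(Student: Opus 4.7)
The plan is to translate both statements into the bipartite complement $\widehat{G}$ and then exploit the closure hypothesis, which says every nonadjacent pair $x\in X$, $y\in Y$ satisfies $d(x)+d(y)\le n+1$. By K\"{o}nig's theorem applied to $\widehat{G}$, the conclusion $G\supseteq K_{a,b}$ with $a+b=2n-k+1$ is equivalent to $\widehat{G}$ having a vertex cover of size at most $k-1$, and the stronger conclusion $K_{n,n-k+1}\subseteq G$ amounts to finding such a cover lying entirely inside $X$ or entirely inside $Y$.

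For the first statement I would argue by contradiction: suppose $\widehat{G}$ admits a matching $\{x_iy_i\}_{i=1}^{k}$, and let $A=\{x_1,\ldots,x_k\}$, $B=\{y_1,\ldots,y_k\}$. The closure condition yields $\sum_i(d(x_i)+d(y_i))\le k(n+1)$. A direct decomposition of $e(G)$ by partitioning edges according to whether each endpoint is in $A$ vs.\ $X\setminus A$ and in $B$ vs.\ $Y\setminus B$ gives the identity
\[
e(G)=e(X\setminus A,\,Y\setminus B)+\sum_{i=1}^{k}d(x_i)+\sum_{i=1}^{k}d(y_i)-e(A,B),
\]
since the edges inside $A\times B$ are counted twice by the two degree sums. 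Bounding $e(X\setminus A,Y\setminus B)\le(n-k)^{2}$ and $e(A,B)\ge 0$ then yields $e(G)\le(n-k)^{2}+k(n+1)=n(n-k)+k(k+1)$, contradicting the hypothesis. Hence the maximum matching of $\widehat{G}$ has size at most $k-1$; by K\"{o}nig there is a vertex cover $C=C_X\cup C_Y$ of $\widehat{G}$ with $|C|\le k-1$, and $(X\setminus C_X)\cup(Y\setminus C_Y)$ spans the required complete bipartite subgraph of order $2n-|C|\ge 2n-k+1$.

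For the \emph{furthermore} part, assume $\delta(G)\ge k$ and suppose for contradiction $K_{n,n-k+1}\not\subseteq G$. Let $X^{\ast}=\{x\in X:d(x)<n\}$ and $Y^{\ast}=\{y\in Y:d(y)<n\}$; the failure of $K_{n,n-k+1}$ in either orientation forces $|X^{\ast}|\ge k$ and $|Y^{\ast}|\ge k$, so since $|C|\le k-1$ neither set is contained in $C$. Pick $y\in Y^{\ast}\setminus C$: because $y$ has some $G$-non-neighbor $x'$ with $d(x')\ge k$, the closure gives $d(y)\le n+1-k$, so $d_{\widehat{G}}(y)\ge k-1$, and all of these $\widehat{G}$-neighbors must lie in $C_X$ (as $y\notin C$ and $C$ is a cover), whence $|C_X|\ge k-1$. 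Symmetrically, picking $x\in X^{\ast}\setminus C$ yields $|C_Y|\ge k-1$. Therefore $|C|\ge 2(k-1)\ge k$ for $k\ge 2$, contradicting $|C|\le k-1$. The case $k=1$ is immediate, since for $k=1$ the edge-count hypothesis already forces $G=K_{n,n}$ through Part~1.

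The main obstacle I anticipate is setting up the edge-decomposition identity in Part~1 so that $e(A,B)$ is subtracted off exactly once; once that identity is established, the bound is immediate. The subtle ingredient in the \emph{furthermore} part is the observation that closure together with $\delta(G)\ge k$ forces every non-isolated vertex of $\widehat{G}$ to have $\widehat{G}$-degree at least $k-1$, which is exactly the lower bound needed to push the minimum cover into a single part.
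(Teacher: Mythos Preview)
Your argument is correct, and it is genuinely different from the paper's. The paper proceeds constructively: it first observes that the high-degree sets $X_1=\{x:d(x)\ge (n+2)/2\}$ and $Y_1=\{y:d(y)\ge (n+2)/2\}$ span a complete bipartite graph (this is where $n\ge 2k$ enters, via $(n+2)/2\ge k+1$), then in two successive edge-counting claims grows the largest balanced clique $K_{t,t}$ to $t\ge n-k+1$ and the largest unbalanced clique $K_{s,t}$ to $s+t\ge 2n-k+1$; a third claim uses $\delta\ge k$ and the closure property to push $s$ up to $n$. Your route through K\"onig's theorem collapses the first two claims into a single matching bound in $\widehat G$: the identity
\[
e(G)=e(X\setminus A,\,Y\setminus B)+\sum_{i}d(x_i)+\sum_{i}d(y_i)-e(A,B)
\]
together with the closure inequality on each matched pair immediately gives $e(G)\le(n-k)^2+k(n+1)=n(n-k)+k(k+1)$, and the minimum vertex cover then delivers the complete bipartite subgraph directly. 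For the \emph{furthermore} part, the paper again uses closure to argue that any vertex outside the large $K_{s,t}$ must be adjacent to all of the $t$-side; your cover-based argument instead exploits that any non-isolated vertex of $\widehat G$ has $\widehat G$-degree at least $k-1$, forcing $|C_X|,|C_Y|\ge k-1$ and hence $|C|\ge 2(k-1)$. What your approach buys is brevity and a cleaner structural picture (the problem is really about matchings in $\widehat G$); it also makes no explicit use of $n\ge 2k$ in Part~1, whereas the paper's seeding step does. What the paper's approach buys is that it is self-contained and avoids invoking K\"onig, proceeding entirely by elementary edge counts.
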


\begin{proof}
[\bf Proof]Let $X_1=\{x\mid x\in X\text{ and }d(x)\geq \frac{n+2}{2}\}$ and $Y_1=\{y\mid y\in Y\text{ and }d(y)\geq \frac{n+2}{2}\}$.
For any  $x\in X_1$ and $y\in Y_1$,  then $d(x)+d(y)\geq n+2$. Note that $G=cl_{B_{n+2}}(G)$,  then $xy\in E(G)$.
For any $x\in X_1$ and $y\in Y_1$, since $d(x)\geq \frac{n+2}{2}\geq k+1$ and $d(y)\geq \frac{n+2}{2}\geq k+1 $ hold,  $K_{k+1,k+1}\subseteq G$.\\
Let $t$ be the maximal integer such that $K_{t,t}\subseteq G$, then $t\geq k+1$.\\
{\bf Claim 1:} $t\geq n-k+1$.

By contradiction. If $k+1\leq t\leq n-k$, then let $X_2\subseteq X$ and  $Y_2\subseteq Y$ such that $ G(X_2,Y_2,E_2)=K_{t,t}$.
There are two cases: \\
(1)  For  any $x\in X\backslash{X_2}$, if there exists $y\in Y_2$ such that $xy\notin E(G)$, by the definition of $cl_{B_{n+2}}(G)$, then $d(x)+d(y)\leq n+1$. Note that $d(y)\geq t$,  then we have $d(x)\leq n-t+1$.

\begin{align*}
e(G)&=e(X_2,Y_2)+e(X_2,Y\backslash Y_2)+e(X\backslash X_2,Y)\\
    &\leq t^2+t(n-t)+(n-t)(n-t+1)\\
    &=n^2+n+t^2-(n+1)t\\
    &\leq n^2+n+(n-k)^2-(n+1)(n-k)\\
    &=n^2-nk+k(k+1)\\
    &<e(G),
\end{align*}
a contradiction. \\
(2)  If there exists $x\in X\backslash{X_2}$ such that for any $y\in Y_2$, $xy\in E(G)$,
then $d(w)\leq n-t+1$ for any $w\in Y\backslash Y_2$.
Otherwise, if there is a vertex $w\in Y\backslash Y_2$ with $d(w)\geq n-t+2$, then $w$
is adjacent to every vertex in $X_2$, which violates the choice of $t$.
Then
\begin{align*}
e(G)&=e(Y_2, X_2)+e(Y_2,X\backslash X_2 )+e( Y\backslash Y_2,X)\\
    &\leq t^2+t(n-t)+(n-t)(n-t+1)\\
    &<e(G),
\end{align*}
a contradiction.

Thus $t\geq n-k+1$.

Now, let $s$ be the largest integer such that $K_{s,t}\subseteq G$. Thus  $s\geq t$.\\
{\bf Claim 2:} $s+t\geq 2n-k+1$.

By contradiction. Suppose that $s+t\leq 2n-k$. Since $s\geq t$,  we have $t\leq n-\frac{k}{2}$ and $t\leq s\leq 2n-k-t$. By Claim 1, $n-k+1\leq t\leq n-\frac{k}{2}$.
 Without loss of generality, let $X_3\subseteq X$ and $Y_3\subseteq Y$ such that $G(X_3,Y_3,E_3)=K_{s,t}$. Then  for any $x\in X\backslash X_3$,   and  any $y\in Y\backslash Y_3$, $d(x)\leq n-s+1$ and $d(y)\leq n-t+1$ hold, respectively. Thus
\begin{align*}
e(G)&=e(X_3,Y_3)+e(X\backslash X_3,Y)+e(X,Y\backslash Y_3)\\
    &\leq st+(n-s)(n-s+1)+(n-t)(n-t+1)\\
    &=s^2-(2n+1-t)s+n^2+n+(n-t)(n-t+1)\\
    &\leq (2n-k-t)^2-(2n+1-t)(2n-k-t)+n^2+n+(n-t+1)(n-t)\\
    &=t^2+(k-2n)t+2n(n+1)-(2n-k)(k+1)\\
    &\leq (n-k+1)^2+(k-2n)(n-k+1)+2n(n+1)-(2n-k)(k+1)\\
    &=n^2-nk+k^2+1\\
    &<n(n-k)+k(k+1)\\
    &<e(G),
\end{align*}
a contradiction.

Thus $s+t\geq 2n-k+1$.

{\bf Claim 3:}  $K_{n,n-k+1}\subseteq G$.

By contradiction. Assume that $K_{n,n-k+1}\nsubseteq G$, then $s\leq n-1$.  By Claim 2,  $s+t\geq 2n-k+1$ holds. Then   $t\geq 2n-k+1-s\geq 2n-k+1-(n-1)=n-k+2$.
 Thus $n-k+2\leq t\leq s\leq n-1$. Note that  $\delta(G)\geq k$ and $d(w)\geq s \geq t$ for any vertex $w\in Y_3$. Then $d(x)+d(w)\geq k+(n-k+2)=n+2$ for any $x\in X\backslash X_3$. By the definition of $cl_{B_{n+2}}(G)$,   for any $x\in X\backslash X_3$,
 $x$ is adjacent to every vertex of $Y_3$.
 This implies that $s=n$, which contradict with $s\leq n-1$.

Hence if $\delta(G)\geq k$, then $K_{n,n-k+1}\subseteq G$.

 The proof is finished.
\end {proof}

Finally, we obtain Theorem \ref{le4} by Lemma \ref{le3}.

\begin{theorem}\label{le4}
Let $G(X, Y, E)$ be a balanced bipartite graph of order $2n$. If $\delta(G)\geq k \geq 1$, $n\geq 2k$, and $e(G)>n(n-k)+k(k+1)$, then $G$ is weakly Hamilton--connected.
\end{theorem}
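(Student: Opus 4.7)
My plan is to reduce to the $B_{n+2}$-closure and then exploit Lemma \ref{le3} to pin down the exact structure. Set $G^* := cl_{B_{n+2}}(G)$; as a spanning supergraph of $G$, it inherits $\delta(G^*) \ge k$, $n \ge 2k$, and $e(G^*) > n(n-k) + k(k+1)$, and satisfies $G^* = cl_{B_{n+2}}(G^*)$ by construction. Lemma \ref{le3} then yields $K_{n, n-k+1} \subseteq G^*$, and by Theorem \ref{th3} it suffices to show that $G^*$ itself is weakly Hamilton--connected. The case $k=1$ is immediate, since then $K_{n,n} \subseteq G^*$ forces $G^* = K_{n,n}$; so assume $k \ge 2$.

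After possibly swapping $X$ and $Y$, I may take $X$ to be completely joined to some $Y_1 \subseteq Y$ of size $n-k+1$; set $Y_2 := Y \setminus Y_1$, so $|Y_2| = k-1$. Any non-edge $xy$ of $G^*$ must have $y \in Y_2$; closedness of $G^*$ gives $d(x) + d(y) \le n+1$, and combined with $d(x) \ge n-k+1$ (from adjacency to $Y_1$) and $d(y) \ge k$ (from $\delta \ge k$), this forces $d(x) = n-k+1$ (so $x$ is attached only to $Y_1$) and $d(y) = k$. Either no such non-edge exists, in which case $G^* = K_{n,n}$ and we are done, or $X = X_r \cup X_p$, where each $x \in X_r$ has full degree $n$, each $x \in X_p$ has degree $n-k+1$ attached only to $Y_1$, and $X_p \neq \emptyset$.

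The main obstacle is then to upgrade this partial decomposition to the rigid identification $G^* = Q_n^k$: \emph{a priori} the $k$-element neighborhoods $N(y) \subseteq X_r$ for $y \in Y_2$ could be distinct subsets and $|X_r|$ could exceed $k$. The key observation is that for any $y \in Y_2$ and any $x \in X_r \setminus N(y)$, closedness together with $d(x) = n$ gives $n + k \le d(x) + d(y) \le n+1$, which forces $|X_r| = k$ and $N(y) = X_r$ for every $y \in Y_2$. Hence the missing edges of $G^*$ are exactly those between $X_p$ (of size $n-k$) and $Y_2$ (of size $k-1$), so $G^* = K_{n,n} - K_{k-1,n-k} = Q_n^k$, which is weakly Hamilton--connected by the remark following Definition \ref{de1}. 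In either branch $G^*$ is weakly Hamilton--connected, and Theorem \ref{th3} then delivers the conclusion for $G$.
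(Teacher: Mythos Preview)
Your proof is correct and follows essentially the same route as the paper: pass to the closure $G^{*}=cl_{B_{n+2}}(G)$, invoke Lemma~\ref{le3} to obtain $K_{n,n-k+1}\subseteq G^{*}$, and then use closedness together with $\delta\ge k$ to force every vertex of $Y_2$ to have the same neighbourhood in $X$, yielding $Q_n^k\subseteq G^{*}$ and hence weak Hamilton--connectedness via Theorem~\ref{th3}. The only differences are cosmetic---you partition $X=X_r\cup X_p$ first rather than comparing $N(y)$ with $N(y')$ directly, and you treat $k=1$ separately---and your ``key observation'' is slightly over-argued (if $x\in X_r$ then $d(x)=n$ already forces $x\in N(y)$, so $X_r\setminus N(y)=\emptyset$ without appealing to closedness), but none of this affects correctness.
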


\begin{proof}
[\bf Proof] Let $G'=cl_{B_{n+2}}(G)$, by Lemma \ref{le3}, $K_{n,n-k+1}\subseteq G'$.
Let $Y_1\subseteq Y$ such that $G(X,Y_1,E_1)=K_{n,n-k+1}$.\\
{\bf Claim:} All vertices in  $ Y\backslash Y_1$ have the same neighbour set.\\
By contradiction. If there are  two vertices  $y, y' \in Y\backslash Y_1$ such that
 $N_{G'}(y)\neq N_{G'}(y')$, then there exist $x\in N_{G'}(y)\backslash N_{G'}(y')$ or
  $x'\in N_{G'}(y')\backslash N_{G'}(y)$. Without loss of generality,
 assume that there exists $x\in N_{G'}(y)\backslash N_{G'}(y')$,
 then $d_{G'}(x)\geq n-k+2$. Since $d_{G'}(y')\geq k$ and $G'=cl_{B_{n+2}}(G)$,
 $x$ is adjacent to $y'$,  a contradiction.

By the above claim, $Q_n^k\subseteq G'$. Since $Q_n^k$  is weakly Hamilton--connected,  $G'$ is weakly Hamilton--connected.
By Theorem \ref{th3}, $G$ is weakly Hamilton--connected.
\end{proof}

\section{Spectral radius and weakly Hamilton--connected bipartite graphs}
In Section 4, we give some sufficient conditions for weakly Hamilton-connectedness of bipartite graphs in terms of spectral radius and
signless Laplacian spectral radius of $G$ and $\widehat G$.

Denote the minimum degree sum of nonadjacent vertices between different parts of $G$ by $\sigma(G)$.
Let $G_1=G(X_1,Y_1,E_1)=K_{t,t}$,  where $X_1=\{x_1,x_2,\ldots, x_t\},  Y_1=\{y_1,y_2,\ldots, y_t\}$;
$G_2=G(X_2,Y_2,E_2)=K_{n-t+1,n-t+1}$,  where $X_2=\{x_m,x_{t+1},\ldots, x_n\},  Y_2=\{y_m,y_{t+1},\ldots, y_n\}$.
Let $R^t_n$ (Figure 1) be the graph obtained by gluing $x_t \text{ with } x_m$ and $y_t \text{ with } y_m$, respectively.
Let $S_n^t$ be the graph obtained from $R^t_n$ by deleting the edge $x_ty_t$.

\vspace{0.5cm}
\begin{tikzpicture}

\draw (6,1) ellipse (3.1 and 2);
\draw (11,1) ellipse (3.1 and 2);

\draw[fill] (5,1.8) circle[radius=0.05];
\draw[fill] (7,1.8) circle[radius=0.05];

\draw[fill] (5.9,1.8) circle[radius=0.02];
\draw[fill] (6,1.8) circle[radius=0.02];
\draw[fill] (6.1,1.8) circle[radius=0.02];

\draw[fill] (8.5,1.8) circle[radius=0.05];

\draw[fill] (10,1.8) circle[radius=0.05];
\draw[fill] (12,1.8) circle[radius=0.05];

\draw[fill] (10.9,1.8) circle[radius=0.02];
\draw[fill] (11,1.8) circle[radius=0.02];
\draw[fill] (11.1,1.8) circle[radius=0.02];

\draw[fill] (5,0.2) circle[radius=0.05];
\draw[fill] (7,0.2) circle[radius=0.05];

\draw[fill] (5.9,0.2) circle[radius=0.02];
\draw[fill] (6,0.2) circle[radius=0.02];
\draw[fill] (6.1,0.2) circle[radius=0.02];

\draw[fill] (8.5,0.2) circle[radius=0.05];

\draw[fill] (10,0.2) circle[radius=0.05];
\draw[fill] (12,0.2) circle[radius=0.05];

\draw[fill] (10.9,0.2) circle[radius=0.02];
\draw[fill] (11,0.2) circle[radius=0.02];
\draw[fill] (11.1,0.2) circle[radius=0.02];

\draw (5,1.8) --(5,0.2) ;
\draw (5,1.8) --(7,0.2) ;
\draw (5,1.8) --(8.5,0.2) ;

\draw (7,1.8) --(5,0.2) ;
\draw (7,1.8) --(7,0.2) ;
\draw (7,1.8) --(8.5,0.2) ;

\draw (5,0.2)-- (8.5,1.8);
\draw (7,0.2)-- (8.5,1.8);

\draw (10,1.8) --(10,0.2) ;
\draw (10,1.8) --(12,0.2) ;
\draw (10,0.2) --(8.5,1.8) ;

\draw (12,1.8) --(10,0.2) ;
\draw (12,1.8) --(12,0.2) ;
\draw (12,0.2) --(8.5,1.8) ;

\draw (8.5,0.2)-- (10,1.8);
\draw (8.5,0.2)-- (12,1.8);

\draw (8.5,0.2) --(8.5,1.8);

\coordinate[label=0.5:$x_1$] (v1) at (4.8,2);
\coordinate[label=1.5:$x_{t-1}$] (v2) at (6.4,2);
\coordinate[label=0.5:$y_1$] (v1) at (4.8,0);
\coordinate[label=1.5:$y_{t-1}$] (v2) at (6.4,0);

\coordinate[label=1.5:$x_{t}$] (v2) at (8.3,2);
\coordinate[label=1.5:$y_{t}$] (v2) at (8.3,0);

\coordinate[label=1:$x_{t+1}$] (v{n-1})  at (9.6,2);
\coordinate[label=3.25:$x_n$] (vn) at (11.7,2);
\coordinate[label=1:$y_{t+1}$] (v{n-1})  at (9.6,0);
\coordinate[label=3.25:$y_n$] (vn) at (11.7,0);

\coordinate[label=1.8:$\text{Figure 1. The graph }R_n^t$] (G2) at (6,-2);
\coordinate[label=0.25:$K_{t,t}$] (K_{n-k-1}) at (5.25,-0.5);
\coordinate[label=2.65:$K_{n-t+1,n-t+1}$] (K_{n-t,n-t}) at (10.2,-0.5);

\coordinate[label=2.5: ] (vn) at (1.8,-2.75);

\end{tikzpicture}

The following lemma is inspired by Ferrara, Jacobson and Powell\cite{lesM2012}.
\begin{lemma}\label{th2}
Let $G=G(X,Y,E)$ be a non--weakly Hamilton--connected bipartite graph with $X=\{x_1,x_2, \dots, x_n\}$ and $Y=\{y_1,y_2, \dots, y_n\}$. If $\sigma(G)=n+1$, then  $S^t_n \subseteq G\subseteq R^t_n$ where $2\leq t\leq n-1$.
\end{lemma}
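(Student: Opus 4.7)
The plan is to pass to a maximal non-weakly-Hamilton-connected supergraph $G^{*}$ of $G$, then run the Berge-style rotation argument of Theorem~\ref{th1} in quantitative form to identify $G^{*}$ with (an isomorphic copy of) $R_n^t$, and finally descend back to $G$. Since adding edges can only increase $\sigma$, $\sigma(G^{*})\ge\sigma(G)=n+1$; conversely the maximality argument already used in the proof of Theorem~\ref{th1} shows that no nonadjacent cross-pair of $G^{*}$ has degree sum greater than $n+1$, so every such pair has degree sum exactly $n+1$.

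Pick a nonadjacent pair $(x,y)$ in $G^{*}$ and set $t=d_{G^{*}}(x)\le d_{G^{*}}(y)=n+1-t$, so $2\le t\le (n+1)/2\le n-1$. As in Theorem~\ref{th1}'s proof, there is an edge $e$ of $G^{*}$ lying in no Hamilton cycle of $G^{*}$, and a Hamilton path $P=xy_1x_2y_2\cdots x_ny$ of $G^{*}$ containing $e$. Put $I=\{i:x\sim y_i\}$ and $J=\{j:y\sim x_j\}$; the single-chord rotation already used in Theorem~\ref{th1} forces $I\cap J=\{i_s\}$ for a unique $i_s$, $I\cup J=[1,n]$, and $e=x_{i_s}y_{i_s}$. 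Writing $A=\{x_i:i\in I\setminus\{i_s\}\}$ and $B=\{y_j:j\in J\setminus\{i_s\}\}$ (with $|A|=t-1$, $|B|=n-t$), the maximality of $d(x)+d(y)$ forces every vertex of $A$ (resp.\ $B$) to have degree exactly $t$ (resp.\ $n+1-t$). The structural core is a three-chord Hamilton-cycle construction: for $i\in I\setminus\{i_s\}$ and $j\in J\setminus\{i_s\}$ with $x_i\sim y_j$, delete the $P$-edges $x_iy_i,\,x_jy_j$ and insert the three chords $xy_i,\,yx_j,\,x_iy_j$; the resulting $2$-regular subgraph is a single Hamilton cycle of $G^{*}$ containing $e$, contradicting the choice of $e$. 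This rules out all $A$-$B$ edges; together with $|B'|=t=d(v)$ for $v\in A$, this forces $A$ complete to $B':=Y\setminus B$, and symmetrically $B$ complete to $A':=X\setminus A$. A short further inspection of nonadjacent pairs inside $A'\times B'$, using $x_{i_s}\sim y$, $y_{i_s}\sim x$ and the uniform identity $d(u)+d(v)=n+1$ on nonadjacent cross-pairs, shows that $x_{i_s}$ and $y_{i_s}$ are universal in $G^{*}$ while every other vertex of $A'$ (resp.\ $B'$) sends exactly one cross-edge, to $y_{i_s}$ (resp.\ $x_{i_s}$). Relabelling $x_{i_s}\to x_t$, $y_{i_s}\to y_t$ identifies $G^{*}$ with $R_n^t$.

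To descend back to $G$, note $G\subseteq G^{*}=R_n^t$. Each non-hub vertex $v$ of $R_n^t$ has at least one non-neighbour $u$ in $R_n^t$ (and hence in $G$) with $d_{R_n^t}(v)+d_{R_n^t}(u)=n+1$; applying $\sigma(G)=n+1$ to this nonadjacent cross-pair forces $d_G(v)=d_{R_n^t}(v)$, so $v$ retains all of its $R_n^t$-edges in $G$. Letting $v$ range over all non-hubs covers every edge of $R_n^t$ except $x_ty_t$, so $S_n^t\subseteq G\subseteq R_n^t$. I expect the main technical obstacle to be the three-chord Hamilton-cycle construction when the chord $x_iy_j$ runs backwards along $P$ (the case $i>j$): the naive subgraph then splits into two disjoint cycles instead of one, and I plan to handle this case either by first performing a P\'osa-style rotation of $P$ along the chord $xy_i$ to obtain a new Hamilton path from $x_i$ to $y$ for which the offending chord runs forward, or by running a symmetric construction on the reversed Hamilton path from $y$ to $x$.
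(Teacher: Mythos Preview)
Your overall architecture matches the paper: pass to a maximal non--weakly Hamilton--connected supergraph $G^{*}$, show that every nonadjacent cross-pair in $G^{*}$ has degree sum exactly $n+1$, locate an edge $e$ lying in no Hamilton cycle and a Hamilton $xy$-path $P$ through $e$, pin down $G^{*}\cong R_n^{t}$, and descend to $G$. Your descent step at the end is correct and in fact cleaner than the paper's.

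The substantive divergence is in how $G^{*}$ is identified, and here there is a real gap. The paper does \emph{not} go straight to the three-chord construction. Its central technical claim is a \emph{contiguity} statement: with $I=\{i:x\sim y_i\}$ and $J=\{j:y\sim x_j\}$, one has $I=\{1,\dots,m\}$ and $J=\{m,\dots,n\}$ for $m=i_s$. This is proved by an iterated rotation argument (auxiliary Hamilton paths $P_1,P_2,P_3$ are produced and the pair analysis is rerun on each). Only \emph{after} contiguity does the paper use the three-chord cycle $x\overrightarrow{P}x_iy_j\overrightarrow{P}yx_j\overleftarrow{P}y_ix$, and contiguity is precisely what guarantees $i<m<j$, so your bad case $i>j$ never occurs.

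Your proposed P\'osa-rotation fix does not close the $i>j$ case as stated. Rotating along $xy_i$ gives $P'=x_i\overleftarrow{P}x\,y_i\overrightarrow{P}y$; on $P'$ the predecessor of $y_j$ is $x_{j+1}$ and the successor of $x_j$ is $y_{j-1}$, so the crossing you need would require $y\sim x_{j+1}$ or $x_i\sim y_{j-1}$, neither of which you have. Reversing the path gives the same obstruction. Iterating rotations until the chords align is possible, but that iteration \emph{is} the paper's contiguity proof. There is a second, related gap in your ``short further inspection'' of $A'\times B'$: even granting $A$ anticomplete to $B$, the degree-sum identity only yields $n-t\le d(x_j)\le n-t+2$ for $x_j\in A'\setminus\{x_{i_s}\}$, not the unique-hub structure; the paper extracts this (the independence of $\{y_1,\dots,y_{m-1},x_{m+1},\dots,x_n\}$) again from contiguity via a further rotated path $P'''$. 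Both holes in your sketch are filled by the same contiguity claim, which appears to be the unavoidable core of the argument rather than a detour you can skip.
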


\begin{proof}
[\bf Proof]Adding some suitable edges to $G$ to get a maximal non--weakly Hamilton--connected bipartite graph $G^*$.
By Corollary \ref{co2}, $\sigma(G^*)\leq n+1$. Since $\sigma(G)=n+1$, $\sigma(G^*)=n+1$ holds.
By Corollary \ref{co1}, there is an edge $e_0$ which  is not contained in any Hamilton cycle of $G^*$.
 Note that $G^*$ is not weakly Hamilton--connected, then it is  not a complete bipartite graph.
  For two nonadjacent vertices $x\in X$ and $ y\in Y$ with $d_{G^*}(x)+d_{G^*}(y)=n+1$,
   $G^*+xy$ is weakly Hamilton--connected.
Thus there is a Hamilton path $P$ with the endpoints $x,y$ in $G^*$,
where $P$ contains $e_0$ with the form $xy_1x_2y_2\cdots x_ny$ $(x=x_1,y=y_n)$.

In $P$, let
\begin{equation*}
S_i=\{x_i,y_i\} \text{ be}
\begin{cases} \text{ an x--pair}, &\text{ if } xy_i\in E(G^*);\\
\text{ a y--pair}, &\text{ if } x_iy\in E(G^*).
\end{cases}
\end{equation*}

For any other Hamilton path $P'$ with the endpoints $x',y'$ in $G^*$, we similarly define $x'$--pair and $y'$--pair with respect to $P'$.

Let $I=\{i |1\leq i \leq n-1,xy_i\in E(G^*) \quad and  \quad x_iy_i\neq e_0\}$, then for arbitrary $i\in I$, $x_iy\notin E(G^*)$. Otherwise, there exists a Hamilton cycle $y_ix\overrightarrow{P}x_iy\overleftarrow{P}y_i$ containing $e_0$.
Since $d_{G^*}(x)+d_{G^*}(y)=n+1$, there exists a unique integer $m$ ($2\leq m\leq n-1$) such that $x_m$ is adjacent to $y$ and $y_m$ is adjacent to $x$ in $P$. Then $e_0=x_my_m$. And for any $i$  ($1\leq i\leq n$ and $i\neq m$), either $x$ is adjacent to $y_i$ or $y$ is adjacent to $x_i$, but not both. \\
{\bf Claim:} $S_1, \dots, S_m$ are $x$--pairs and $S_m, \dots, S_n$ are $y$--pairs with respect to $P$.

\indent (I)   $S_1, \dots, S_m$ are $x$--pairs with respect to $P$.

For $m=2$, it is obvious.

For $3\leq m\leq n-1$, we prove Case (I)  by contradiction.

Suppose that there exists $1\leq j\leq m-2$ such that $S_1, \dots, S_j$ are $x$--pairs and $S_{j+1}$ is $y$--pair with respect to $P$.
Since there is a Hamilton path $P_1=y_{m-1}\overleftarrow{P}xy_mx_my\overleftarrow{P}x_{m+1}$ containing $e_0=x_my_m$ in $G^*$,
$x_{m+1}y_{m-1}\notin E(G^*)$ holds. It implies that $d_{G^*}(y_{m-1})+d_{G^*}(x_{m+1})\geq n+1$.
Since $S_m=\{y_m,x_m\}$ is both an $x_{m+1}$--pair and a $y_{m-1}$--pair with respect to $P_1$,
 $S_j=\{y_j,x_j\}$ is either an $x_{m+1}$--pair or a $y_{m-1}$--pair with respect to $P_1$, but not both. \\
(1)   If $x_{m+1}y_j\in E(G^*)$, then there is a Hamilton cycle $y_j\overleftarrow{P}xy_mx_m\overleftarrow{P}x_{j+1}y\overleftarrow{P}x_{m+1}y_j$ containing $e_0=x_my_m$ which is contrary to the hypothesis. \\
(2)  If $x_jy_{m-1}\in E(G^*)$, then $x_{m+1}\text { and } y_j$ is a pair of nonadjacent vertices in $G^*$ which implies $d_{G^*}(y_j)+d_{G^*}(x_{m+1})\geq n+1$.
Since there is a Hamilton path $$P_2=y_j\overleftarrow{P}xy_mx_m\overleftarrow{P}x_{j+1}y\overleftarrow{P}x_{m+1},$$
there exists an integer $1\leq k\leq n$ such that $S_k=\{y_k,x_k\}$ is both $y_j$--pair and $x_{m+1}$--pair with respect to $P_2$.
By the above proof, we have $k=m$. That is, $x_my_j\in G^*$.
Then there is a Hamilton cycle $x_my_m\overrightarrow{P}yx_{j+1}\overrightarrow{P}y_{m-1}x_j\overleftarrow{P}xy_jx_m$, a contradiction.

Thus $S_1, \dots, S_m$ are all $x$--pairs with respect to $P$.

\indent (II) $S_m, \dots, S_n$ are $y$--pairs with respect to $P$.

By contradiction. Suppose that there exists $j$ ($m\leq j<n-1$) such that $S_m, \dots, S_j$ are $y$--pairs and $S_{j+1}$ is $x$--pair.
Since there is a Hamilton path $P_3=x_{j+1}\overleftarrow{P}xy_{j+1}\overrightarrow{P}y$ which contains $e_0$, $y$ is not adjacent to $x_{j+1}$.
It implies that $d_{G^*}(y)+d_{G^*}(x_{j+1})\geq n+1$.
Then

(1)  If there exists $1\leq k\leq j-1$ such that $\{x_{k+1},y_k\}$ is both $x_{j+1}$--pair and $y$--pair with respect to $P_3$,
then there is a Hamilton cycle $y_k\overleftarrow{P}xy_{j+1}\overrightarrow{P}yx_{k+1}\overrightarrow{P}x_{j+1}y_{k}$,  which contains $e_0$, a contradiction.

(2)   If there exists $j+2\leq k\leq n-1$ such that $\{x_{k},y_k\}$ is both $x_{j+1}$--pair and $y$--pair with respect to $P_3$,
then there is a Hamilton cycle $x_{j+1}\overleftarrow{P}xy_{j+1}\overrightarrow{P}x_{k}y\overleftarrow{P}y_{k}x_{j+1}$ which contains $e_0$, a contradiction.

Thus $S_m, \dots, S_n$ are all $y$--pairs with respect to $P$.

Hence, $S_1, \dots, S_m$ are $x$--pairs and $S_m, \dots, S_n$ are $y$--pairs with respect to $P$.

For  arbitrary $i$, $ j$ with $1\leq i<m<j\leq n$, $x_i$ and $y_j$ is nonadjacent.
Otherwise, there exists a Hamilton cycle $x\overrightarrow{P}x_iy_j\overrightarrow{P}yx_j\overleftarrow{P}y_ix$,  which contains $e_0$ in $G^*$.
Then \{$x_1,\dots,x_{m-1},y_{m+1}, \ldots,y_n$\} is an independent set.

For arbitrary $2\leq i<m$, there exists a Hamilton path $P'=x_i\overleftarrow{P}xy_i\overrightarrow{P}y$ contains $e_0$ in $G^*$. By the above claim,
for any $1\leq l\leq i-1$ and $i+1\leq k\leq m$, $\{x_l, y_{l-1}\} $ and $\{x_k, y_{k}\}$ are  $x_i$--pairs with respect to $P'$.
For arbitrary $m<j\leq n-1$, there exists a Hamilton path $P''=y_j\overrightarrow{P}y x_j\overleftarrow{P}x$ contains $e_0$ in $G^*$. By the above claim,
for any $m\leq k\leq j-1$ and $j+1\leq l\leq n-1$, $\{x_k, y_{k}\}$ and $\{y_l, x_{l+1}\} $ are  $y_j$--pairs with respect to $P''$.
Thus there exists a Hamilton path $P'''=x_ny_nx_{n-1}y_{n-1}\cdots x_2y_2x_1y_1$ contains  $e_0$ in $G^*$.

For  arbitrary $i$, $ j$ with $1\leq i<m<j\leq n$, $x_j$ and $y_i$ must be nonadjacent.
Otherwise, $x\overrightarrow{P}y_ix_j\overrightarrow{P}yx_{j-1}\overrightarrow{P'''}y_{i+1}x$ is a Hamilton cycle which contains $e_0$ in $G^*$.
Then \{$y_1,\dots,y_{m-1},x_{m+1},\dots,x_n$\} is an independent set.
Since $\sigma(G^*)=n+1$, $G^*\subseteq R_n^m$ holds.
And by the assumption on the maximality of $G^*$, we have $G^*\cong R_n^m$. Removing any edge $x_iy_j$ from $G^*$, where either $1\leq i\leq m$, $1\leq j\leq m$ or $m\leq j\leq n $, $m\leq i\leq n $ unless $i=j=m$,  we obtain a graph with minimum degree sum of nonadjacent vertices less than $n+1$. Thus $S^m_n\subseteq G\subseteq R^m_n $.

Hence,  $S^m_n\subseteq G\subseteq  R_n^m$ holds for $2\leq m\leq n-1$.

 The proof is completed.
\end{proof}

By direct calculations, we have the following lemma.

\begin{lemma}\label{le5}
(1) $\rho(Q_n^t)>\rho(K_{n,n-t+1})=\sqrt{n(n-t+1)}$.\\
(2) $q(Q_n^t)>q(K_{n,n-t+1})=2n-t+1$.\\
(3 )$\rho(\widehat{R^t_n})=\rho(\widehat{S^t_n})=\rho(\widehat{Q^t_n})=\rho(K_{t-1,n-t})=\sqrt{(t-1)(n-t)}$.\\
(4) $q(\widehat{R^t_n})=q(\widehat{S^t_n})=q(\widehat{Q^t_n})=q(K_{t-1,n-t})=n-1$.
\end{lemma}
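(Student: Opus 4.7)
The plan is to verify each of the four identities by direct spectral reduction to complete bipartite graphs, using two standard facts: $\rho(K_{m,k})=\sqrt{mk}$, and $q(K_{m,k})=m+k$ (the latter since $K_{m,k}$ is bipartite, so the signless Laplacian and Laplacian spectra coincide, and $K_{m,k}$ has Laplacian spectrum $\{0,m^{(k-1)},k^{(m-1)},m+k\}$).

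For parts (1) and (2), I would label the deleted $K_{t-1,n-t}$ in the construction of $Q_n^t$ as sitting between $\{x_1,\ldots,x_{t-1}\}$ and some $(n-t)$-subset of $Y$. Then the subgraph of $Q_n^t$ induced on $\{x_t,\ldots,x_n\}\cup Y$ is a copy of $K_{n-t+1,n}\cong K_{n,n-t+1}$. Since $Q_n^t$ is connected (for $2\le t\le(n+1)/2$, the set $\{x_t,\ldots,x_n\}$ remains fully joined to $Y$) and strictly contains this $K_{n,n-t+1}$, Perron--Frobenius gives $\rho(Q_n^t)>\sqrt{n(n-t+1)}$, and the analogous strict monotonicity of $q$ under edge addition on a connected graph gives $q(Q_n^t)>2n-t+1$.

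For parts (3) and (4) I would compute each quasi-complement directly. In $\widehat{Q_n^t}$ the only edges are those of the removed $K_{t-1,n-t}$, the remaining vertices being isolated, so $\rho(\widehat{Q_n^t})=\sqrt{(t-1)(n-t)}$ and $q(\widehat{Q_n^t})=n-1$. For $\widehat{R_n^t}$, using the identifications $x_t=x_m$ and $y_t=y_m$: a vertex $x_i$ with $1\le i\le t-1$ has $R_n^t$-neighbourhood $Y_1$, hence quasi-complement neighbourhood $\{y_{t+1},\ldots,y_n\}$; a vertex $x_i$ with $t+1\le i\le n$ has $R_n^t$-neighbourhood $Y_2$, hence quasi-complement neighbourhood $\{y_1,\ldots,y_{t-1}\}$; and both $x_t$ and $y_t$ see the full opposite part in $R_n^t$, so they are isolated in $\widehat{R_n^t}$. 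This exhibits $\widehat{R_n^t}$ as the vertex-disjoint union of two copies of $K_{t-1,n-t}$ together with two extra isolated vertices $x_t,y_t$. Since $\widehat{S_n^t}=\widehat{R_n^t}+x_ty_t$, the only difference in the $S_n^t$ case is that $\{x_t,y_t\}$ becomes a $K_2$ component.

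The adjacency and signless Laplacian spectra of a disjoint union are multiset-unions of the component spectra, so $\rho$ and $q$ of $\widehat{R_n^t}$ and $\widehat{S_n^t}$ equal the maximum over components; the extra $K_2$ contributes only $\rho(K_2)=1\le\sqrt{(t-1)(n-t)}$ and $q(K_2)=2\le n-1$ in the relevant range $2\le t\le n-1$, so it never dominates. The only mild subtlety is bookkeeping the vertex identifications in $R_n^t$ carefully enough to see that the two copies of $K_{t-1,n-t}$ inside $\widehat{R_n^t}$ are genuinely vertex-disjoint; once that is pinned down, the four equalities follow immediately.
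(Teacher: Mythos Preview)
Your proposal is correct and is exactly the kind of direct verification the paper has in mind: the paper offers no argument beyond the phrase ``By direct calculations,'' and you have supplied those calculations in full. Your identification of the $K_{n,n-t+1}$ sitting inside $Q_n^t$ (together with Perron--Frobenius/edge-monotonicity for the strict inequalities) and your explicit description of each quasi-complement as one or two copies of $K_{t-1,n-t}$ plus isolated vertices (or an extra $K_2$ in the $S_n^t$ case) are precisely the computations needed, and your check that the $K_2$ component never dominates in the relevant range $2\le t\le n-1$ closes the only potential loose end.
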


\begin{lemma}(Bhattacharya, Friedland, and Peled \cite{lesB2008})\label{f1}
Let $G=G(X,Y,E)$ be a bipartite graph. Then
\[
\rho(G)\leq \sqrt {e(G)}.
\]
\end{lemma}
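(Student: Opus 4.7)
The plan is to exploit the block structure of the adjacency matrix of a bipartite graph. Writing $A(G)=\begin{pmatrix}0 & B\\ B^{T} & 0\end{pmatrix}$, where $B$ is the $|X|\times |Y|$ biadjacency matrix, one sees that the spectrum of $A(G)$ is symmetric about $0$: if $(u,v)^{T}$ is an eigenvector of $A(G)$ with eigenvalue $\lambda$, then $(u,-v)^{T}$ is an eigenvector with eigenvalue $-\lambda$. Consequently, the eigenvalues can be listed as $\lambda_{1}\geq \lambda_{2}\geq \cdots \geq \lambda_{2n}$ with $\lambda_{i}=-\lambda_{2n+1-i}$ for every $i$, and in particular $\rho(G)=\lambda_{1}=-\lambda_{2n}$.

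First I would compute the trace of $A(G)^{2}$. The $(i,i)$-entry of $A(G)^{2}$ equals $d_{G}(v_{i})$, so
\[
\sum_{i=1}^{2n}\lambda_{i}^{2}=\operatorname{tr}(A(G)^{2})=\sum_{v\in V(G)}d_{G}(v)=2e(G).
\]
Next I would invoke the $\pm$ symmetry of the spectrum recorded above, which yields
\[
\sum_{i=1}^{2n}\lambda_{i}^{2}=2\sum_{i=1}^{n}\lambda_{i}^{2}\geq 2\lambda_{1}^{2}=2\rho(G)^{2}.
\]
Combining the two displays gives $2\rho(G)^{2}\leq 2e(G)$, i.e.\ $\rho(G)\leq \sqrt{e(G)}$.

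There is no real obstacle here; the only point that requires a line of care is the symmetry of the bipartite spectrum, which follows at once from the block form of $A(G)$ (equivalently, from the observation that the nonzero eigenvalues of $A(G)$ are exactly $\pm \sigma_{i}(B)$, where $\sigma_{i}(B)$ are the singular values of $B$). Everything else is a direct computation of $\operatorname{tr}(A(G)^{2})$ together with the elementary inequality $\sum \lambda_{i}^{2}\geq 2\lambda_{1}^{2}$ that the bipartite pairing provides.
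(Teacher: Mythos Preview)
Your argument is correct. The paper does not actually give its own proof of this lemma: it is quoted as a known result of Bhattacharya, Friedland and Peled and used as a black box in the proof of Theorem~\ref{th6}. So there is nothing to compare against on the paper's side.

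A small cosmetic point: you index the eigenvalues $\lambda_{1},\dots,\lambda_{2n}$, which tacitly assumes $|X|=|Y|=n$, whereas the lemma as stated covers an arbitrary bipartite graph. This does not affect the substance of your proof, since for any bipartite $G$ the nonzero eigenvalues of $A(G)$ still occur in $\pm$ pairs (equivalently, they are $\pm\sigma_i(B)$ for the singular values of the biadjacency matrix $B$), and hence
\[
2e(G)=\operatorname{tr}(A(G)^{2})=\sum_i \lambda_i^{2}\geq 2\rho(G)^{2}
\]
exactly as you wrote. Replacing $2n$ by $|V(G)|$ throughout makes the write-up match the stated generality.
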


\begin{lemma}(Li and Ning\cite{lesN.L2016})\label{f2}
Let $G=G(X,Y,E)$ be a balanced bipartite graph of order $2n$. Then
\[
q(G)\leq \frac {e(G)}{n}+n.
\]
\end{lemma}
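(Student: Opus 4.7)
The plan is to reduce the statement to a classical bound on the largest Laplacian eigenvalue and then exploit the balanced bipartite structure. Since $G$ is bipartite, the orthogonal involution $(x;y)\mapsto(x;-y)$ conjugates $L(G)=D(G)-A(G)$ into $Q(G)=D(G)+A(G)$, so $L(G)$ and $Q(G)$ are cospectral and $q(G)=\mu_1(G)$, where $\mu_1(G)$ denotes the largest Laplacian eigenvalue of $G$.

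I would then invoke Merris's inequality
\[
\mu_1(G)\;\le\;\max_{v\in V(G)}\bigl\{\,d(v)+m(v)\,\bigr\},\qquad m(v):=\frac{1}{d(v)}\sum_{u\sim v}d(u),
\]
and argue that $d(v)+m(v)\le e(G)/n+n$ for every non-isolated vertex $v$. Fix $v\in X$ (the case $v\in Y$ is symmetric), and set $d=d(v)$, $S=\sum_{u\sim v}d(u)$. Since every neighbour of $v$ lies in $Y$, we have $S\le\sum_{u\in Y}d(u)=e(G)$; trivially also $S\le dn$. The target inequality $d+S/d\le e(G)/n+n$ rearranges to $S\le de(G)/n+d(n-d)$, and a short case split on whether $d\le e(G)/n$ or $d\ge e(G)/n$ verifies this: in the first case use $S\le dn$ together with the observation $d(e(G)/n-d)\ge 0$; in the second case use $S\le e(G)$ together with $e(G)(n-d)/n\le d(n-d)$, which follows from $d\ge e(G)/n$ after dividing by $n-d\ge 0$.

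The main obstacle I anticipate is choosing a sufficiently sharp Laplacian estimate. The coarser Anderson--Morley bound $\mu_1(G)\le\max\{d(u)+d(v):uv\in E\}$ is \emph{not} adequate here: a balanced bipartite graph with two high-degree hubs on opposite sides but few other edges can have $\max\{d(u)+d(v)\}$ close to $2n$ while $e(G)/n+n$ is much smaller. Merris's averaged variant is essential because the sum $\sum_{u\sim v}d(u)$ is controlled globally by $e(G)$, which is exactly what produces the ratio $e(G)/n$ in the final estimate. Putting the pieces together yields $q(G)=\mu_1(G)\le\max_v\{d(v)+m(v)\}\le e(G)/n+n$, as required.
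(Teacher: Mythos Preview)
Your argument is correct. The paper does not prove this lemma at all: it is quoted verbatim from Li and Ning \cite{lesN.L2016} and used as a black box, so there is no ``paper's own proof'' to compare against. Your route---identify $q(G)$ with the largest Laplacian eigenvalue via the bipartite sign involution, apply Merris's bound $\mu_1(G)\le\max_v\{d(v)+m(v)\}$, and then bound $d(v)+m(v)$ by $e(G)/n+n$ using $\sum_{u\sim v}d(u)\le e(G)$ and $\sum_{u\sim v}d(u)\le d(v)\,n$---is exactly the kind of argument one uses for such inequalities, and your two-case verification of $S\le d\,e(G)/n+d(n-d)$ is clean. The only cosmetic point is that in the second case you write ``dividing by $n-d\ge 0$''; when $d=n$ both sides of $e(G)(n-d)/n\le d(n-d)$ vanish, so there is nothing to divide, and you might say so explicitly. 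Your remark that the cruder Anderson--Morley bound is inadequate is also well taken and shows you understand why the averaged form is needed.
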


\begin{lemma}(Li and Ning\cite{lesN.L2016})\label{f3}
Let $G$ be a graph with nonempty edge set. Then
\[
\rho(G)\geq min\{\sqrt{d(u)d(v)}: uv\in E(G)\}.
\]
If $G$ is connected, then the equality holds if and only if $G$ is regular or semi--regular bipartite.
\end{lemma}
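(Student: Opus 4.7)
The plan is to obtain the lower bound via the Rayleigh quotient characterization of $\rho(G)$ applied to a carefully chosen trial vector, and then to analyze when equality can hold. The trial vector I would use is $x \in \mathbb{R}^{V(G)}$ with $x_v = \sqrt{d(v)}$ for every vertex $v$; this is well-defined and nonzero because $E(G) \neq \emptyset$.

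First I compute $x^T x = \sum_v d(v) = 2 e(G)$ and $x^T A(G) x = 2 \sum_{uv \in E(G)} \sqrt{d(u) d(v)}$, the latter because each edge contributes twice (once for each ordering of its endpoints). Rayleigh's principle then gives
\[
\rho(G) \;\geq\; \frac{x^T A(G) x}{x^T x} \;=\; \frac{1}{e(G)} \sum_{uv \in E(G)} \sqrt{d(u) d(v)} \;\geq\; \min\{\sqrt{d(u) d(v)} : uv \in E(G)\},
\]
where the last inequality is just the elementary fact that the arithmetic mean of a family of real numbers is at least their minimum. This proves the desired lower bound.

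For the equality case, assume $G$ is connected. Both inequalities above must be tight. The last one forces $\sqrt{d(u) d(v)}$ to be constant along every edge, i.e.\ $d(u) d(v) = c$ for some constant $c$ whenever $uv \in E(G)$. The Rayleigh step is tight if and only if $x$ is an eigenvector of $A(G)$ for the eigenvalue $\rho(G)$; because $G$ is connected and $x$ is strictly positive, $x$ must be a positive scalar multiple of the Perron eigenvector, and one checks directly that $A(G) x = \sqrt{c}\, x$ once $d(u) d(v) \equiv c$ holds on edges.

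The main obstacle, and the only delicate step, is translating the algebraic condition ``$d(u) d(v)$ is constant on edges'' into the stated structural dichotomy. I would argue as follows: if two edges $uv$ and $vw$ share the vertex $v$, then $d(u) d(v) = d(v) d(w)$ forces $d(u) = d(w)$. Propagating this along walks shows that any two vertices at even distance from each other have the same degree, so $V(G)$ splits into at most two degree classes such that every edge joins different classes. If only one class is realized, then $G$ is regular; otherwise the two classes form a bipartition of $G$ and $G$ is semi-regular bipartite. The converse---that regular graphs and semi-regular bipartite graphs actually attain $\rho(G) = \min \sqrt{d(u) d(v)}$---follows from a direct computation (for regular $G$ both sides equal the common degree, and for semi-regular bipartite $G$ with part-degrees $r, s$ both sides equal $\sqrt{rs}$), which completes the characterization.
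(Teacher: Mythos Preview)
The paper does not prove this lemma; it is quoted verbatim from Li and Ning \cite{lesN.L2016} as an external result, so there is no in-paper argument to compare your attempt against.

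That said, your proof is correct and is essentially the standard one. The Rayleigh quotient with the trial vector $x_v=\sqrt{d(v)}$ gives the inequality cleanly, and your handling of the equality case is sound: once $d(u)d(v)\equiv c$ on edges, the identity $(Ax)_v=\sum_{u\sim v}\sqrt{c/d(v)}=\sqrt{c}\,x_v$ shows $x$ is a positive eigenvector, hence the Perron vector by connectivity, so $\rho(G)=\sqrt{c}$. Your structural step---neighbours of a common vertex share the same degree, hence degrees depend only on the parity of distance from a fixed base vertex---does yield the regular/semi-regular bipartite dichotomy; the only point worth spelling out a bit more is why an edge inside one degree class forces the two class-degrees to coincide (if $r\neq s$ and some edge has both ends of degree $r$, then $r^2=c=rs$, contradiction), which pins down that the two classes really form a bipartition when they are distinct.
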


\begin{lemma}(Li and Ning\cite{lesN.L2016})\label{f4}
Let $G$ be a graph with nonempty edge set. Then
\[
q(G)\geq min\{d(u)+d(v): uv\in E(G)\}.
\]
If $G$ is connected, then the equality holds if and only if $G$ is regular or semi--regular bipartite.
\end{lemma}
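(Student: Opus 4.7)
The plan is to prove this via the vertex--edge incidence matrix. Let $B$ denote the $|V(G)| \times |E(G)|$ incidence matrix of $G$. The identity $Q(G) = BB^T$ is standard, so $q(G) = \rho(BB^T)$. Since $BB^T$ and $B^TB$ are both nonnegative and share the same nonzero spectrum, I would have $q(G) = \rho(B^TB)$, and the task reduces to bounding $\rho(B^TB)$ from below.

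Next I would compute the edge-indexed matrix $B^TB$ explicitly: $(B^TB)_{ee} = 2$ and, for distinct edges $e,e'$, $(B^TB)_{ee'}$ equals the number of shared endpoints, which is $1$ if $e$ and $e'$ share a vertex and $0$ otherwise. For an edge $e = uv$, the $e$-th row sum is therefore
\[
(B^TB\,\mathbf{1})_e \;=\; 2 + (d(u)-1) + (d(v)-1) \;=\; d(u) + d(v),
\]
counting the $d(u)-1$ other edges at $u$ and the $d(v)-1$ other edges at $v$. Applying the elementary Collatz--Wielandt bound $\rho(M) \geq \min_i (M\mathbf{1})_i$, valid for any nonnegative square matrix $M$, I would conclude
\[
q(G) \;=\; \rho(B^TB) \;\geq\; \min_{uv \in E(G)}\bigl(d(u)+d(v)\bigr),
\]
which is the desired inequality.

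For the equality characterization under the connectedness hypothesis, if $G$ is connected with at least one edge then $B^TB$ is irreducible (its off-diagonal pattern is the adjacency of the line graph, and the line graph of a connected graph with an edge is connected), so by Perron--Frobenius the Perron eigenvector is strictly positive and unique up to scaling. Equality in the Collatz--Wielandt bound then forces the test vector $\mathbf{1}$ itself to be this Perron eigenvector, which happens exactly when $d(u)+d(v)$ is a common constant $c$ over every edge $uv$. The main obstacle I expect is the purely combinatorial step of turning this edge-constancy into the regular/semi-regular bipartite dichotomy. My argument would be: if $uv$ and $vw$ are two edges sharing $v$, then $d(u) = c - d(v) = d(w)$, so any two vertices at distance two in $G$ share a degree; propagating this through the connected graph partitions $V(G)$ into at most two degree classes that alternate along every edge. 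If only one class appears, $G$ is regular; if two classes appear they must lie in opposite parts of a bipartition, giving a semi-regular bipartite graph. The converse direction is an immediate check: for a $d$-regular graph $q(G) = 2d$, and for a $(d_1,d_2)$-semi-regular bipartite graph $q(G) = d_1 + d_2$, matching the minimum edge-sum in each case.
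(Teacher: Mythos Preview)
The paper does not give its own proof of this lemma: it is quoted verbatim from Li and Ning and used as a black box in the proof of Theorem~\ref{th6}. There is therefore nothing in the paper to compare your argument against.

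That said, your proof is correct and is essentially the standard one. The identity $Q(G)=BB^{T}$, the passage to $B^{T}B$ (whose row sums are exactly the edge degree--sums $d(u)+d(v)$), and the minimum--row--sum lower bound for the Perron root of a nonnegative matrix together give the inequality cleanly. For the equality case your irreducibility claim is right---the off--diagonal pattern of $B^{T}B$ is the adjacency matrix of the line graph $L(G)$, which is connected when $G$ is connected with an edge---and the Perron--Frobenius argument then forces $d(u)+d(v)$ to be constant along edges. Your propagation step (any two vertices at distance~$2$ share a degree, so connectedness yields at most two degree classes alternating along edges) is the standard way to extract the regular/semi--regular bipartite dichotomy, and the converse verifications $q=2d$ and $q=d_1+d_2$ are correct.
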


\begin{theorem}\label{th6}
Let $G=G(X,Y,E)$ be a bipartite graph with $|X|=|Y|=n$ and the minimum degree $\delta(G) \geq k\geq 2$. \\
(1) If $n\geq k(k+1)$ and $\rho(G)\geq \rho(Q^k_n)$, then $G$ is weakly Hamilton--connected.\\
(2) If $n\geq k(k+1)$ and $q(G)\geq q(Q^k_n)$, then $G$ is weakly Hamilton--connected.\\
(3) If $n\geq 2k-1$ and $\rho(\widehat G)\leq \rho(\widehat {Q^k_n})$, then $G$ is weakly Hamilton--connected unless $S^k_n\subseteq G\subseteq R^k_n$.\\
(4) If $n\geq 2k-1$ and $q(\widehat G)\leq q(\widehat {Q^k_n})$, then $G$ is weakly  Hamilton--connected unless $S^k_n\subseteq G\subseteq R^k_n$.
\end{theorem}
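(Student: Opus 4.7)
The plan is to argue each of the four parts by contradiction, combining the edge--count Theorem~\ref{le4} (for (1)--(2)) with the structural classification of Lemma~\ref{th2} (for (3)--(4)).

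For parts~(1) and~(2), I would suppose $G$ is not weakly Hamilton--connected. Since $\delta(G)\geq k$ and $n\geq k(k+1)\geq 2k$, Theorem~\ref{le4} forces $e(G)\leq n(n-k)+k(k+1)$. For (1), Lemma~\ref{f1} then gives $\rho(G)\leq\sqrt{e(G)}$; using $k(k+1)\leq n$ I obtain $e(G)\leq n(n-k+1)$, hence $\rho(G)\leq\sqrt{n(n-k+1)}<\rho(Q^k_n)$ by Lemma~\ref{le5}(1), contradicting the hypothesis. For (2), Lemma~\ref{f2} gives
\[
q(G)\leq\frac{e(G)}{n}+n\leq 2n-k+\frac{k(k+1)}{n}\leq 2n-k+1,
\]
while Lemma~\ref{le5}(2) gives $q(Q^k_n)>2n-k+1$, again a contradiction.

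For parts~(3) and~(4), I would assume $G$ is not weakly Hamilton--connected and split on the value of $\sigma(G)$, which is at most $n+1$ by Corollary~\ref{co2}. If $\sigma(G)\leq n$, pick nonadjacent $x\in X,\ y\in Y$ with $d_G(x)+d_G(y)\leq n$; then $xy\in E(\widehat G)$ and $d_{\widehat G}(x)+d_{\widehat G}(y)\geq n$, while $\delta(G)\geq k$ forces $d_{\widehat G}(\cdot)\leq n-k$ and hence both $d_{\widehat G}(x),\,d_{\widehat G}(y)\geq k$. Lemma~\ref{f3} applied to $xy$ then gives $\rho(\widehat G)\geq\sqrt{k(n-k)}>\sqrt{(k-1)(n-k)}=\rho(\widehat{Q^k_n})$ via Lemma~\ref{le5}(3), and Lemma~\ref{f4} gives $q(\widehat G)\geq n>n-1=q(\widehat{Q^k_n})$ via Lemma~\ref{le5}(4), contradicting both hypotheses.

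It remains to handle $\sigma(G)=n+1$: Lemma~\ref{th2} produces $t$ with $2\leq t\leq n-1$ and $S^t_n\subseteq G\subseteq R^t_n$, and combining $G\subseteq R^t_n$ with $\delta(G)\geq k$ forces $k\leq t\leq n-k+1$. From $\widehat G\supseteq\widehat{R^t_n}$, monotonicity of the spectral radius together with Lemma~\ref{le5}(3) gives $\rho(\widehat G)\geq\sqrt{(t-1)(n-t)}$. For (3), the hypothesis then forces $(t-1)(n-t)\leq(k-1)(n-k)$; since the concave quadratic $(t-1)(n-t)$ on $[k,n-k+1]$ attains the value $(k-1)(n-k)$ only at the symmetric endpoints $t=k$ and $t=n-k+1$ (which yield $R^t_n\cong R^k_n$ after relabeling), I conclude $S^k_n\subseteq G\subseteq R^k_n$. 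The main obstacle is the analogous step for part~(4): Lemma~\ref{le5}(4) shows that $q(\widehat{R^t_n})=n-1$ independently of $t$, so the monotonicity argument alone only yields $q(\widehat G)=n-1$ without pinning down $t$. I expect this step to require invoking the equality case of Lemma~\ref{f4}, which forces $\widehat G$ to be regular or semi--regular bipartite on each connected component, and then ruling out $t\neq k$ by contradicting the sharply non--uniform degree pattern $\{n-t,\,t-1,\,0\}$ imposed by the glued--vertex structure of $R^t_n$ together with $\delta(G)\geq k$.
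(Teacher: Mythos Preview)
Your arguments for (1) and (2) are the contrapositive of the paper's and use the same ingredients; they are fine.

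For (3) and (4), however, the case $\sigma(G)\leq n$ contains a genuine error. Lemma~\ref{f3} asserts
\[
\rho(\widehat G)\geq \min\bigl\{\sqrt{d_{\widehat G}(u)d_{\widehat G}(v)}:uv\in E(\widehat G)\bigr\},
\]
a minimum over \emph{all} edges; it does \emph{not} say $\rho(\widehat G)\geq\sqrt{d_{\widehat G}(x)d_{\widehat G}(y)}$ for a single chosen edge $xy$ (this fails already for $P_4$, where the middle edge has $\sqrt{d(u)d(v)}=2>\rho(P_4)$). Likewise for Lemma~\ref{f4}. Knowing only that \emph{one} nonadjacent pair in $G$ has degree sum $\leq n$ tells you nothing about the other edges of $\widehat G$, so you cannot conclude $\rho(\widehat G)\geq\sqrt{k(n-k)}$ or $q(\widehat G)\geq n$.

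The paper repairs exactly this point by passing to the closure $G'=cl_{B_{n+2}}(G)$. In $G'$ \emph{every} nonadjacent pair has degree sum $\leq n+1$, so \emph{every} edge of $\widehat{G'}$ has degree sum $\geq n-1$ and (using $\delta(G')\geq k$) product $\geq(k-1)(n-k)$; now Lemmas~\ref{f3} and~\ref{f4} legitimately give $\rho(\widehat{G'})\geq\sqrt{(k-1)(n-k)}$ and $q(\widehat{G'})\geq n-1$. Combined with $\widehat{G'}\subseteq\widehat G$ and the hypothesis, one gets equality, and the equality clause of Lemmas~\ref{f3}/\ref{f4} forces the component containing the extremal edge to be semi-regular. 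The paper then runs a component-by-component case analysis on $\widehat{G'}$ to deduce $\sigma(G')=n+1$, applies Lemma~\ref{th2} to $G'$, and finally descends to $G$. Your acknowledged obstacle in (4) is handled by this same equality-plus-component analysis; the vague plan you sketch is on the right track but needs the closure to get off the ground.
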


\begin{proof}
(1) By Lemmas  \ref{le5} and \ref{f1},
\[
\sqrt{n(n-k+1)}<\rho(Q^k_n)\leq\rho(G)\leq\sqrt {e(G)}.
\]
Then $e(G)>n(n-k+1)\geq n(n-k)+k(k+1)$ with $n\geq k(k+1)$. By Theorem \ref{le4} and $Q^k_n$ is weakly Hamilton--connected, then $G$ is weakly Hamilton--connected.

(2) By Lemmas \ref{le5} and \ref{f2}
\[
2n-k+1<q(G)\leq \frac{e(G)}{n}+n.
\]
Then
\[
e(G)>n(n-k+1)\geq n(n-k)+k(k+1)
\]
with $n\geq k(k+1)$. By Theorem \ref{le4} and $Q^k_n$ is weakly Hamilton--connected,  then $G$ is weakly Hamilton--connected.

(3) By contradiction. Suppose that $G$ is not weakly Hamilton--connected and $S^k_n\nsubseteq G\nsubseteq R^k_n$.  Let $G'=cl_{B_{n+2}}(G)$.  By Theorem \ref{th3}, $G'$ is not weakly Hamilton--connected. For any a pair of nonadjacent vertices $x\in X$ and $y\in Y$, by $G'=cl_{B_{n+2}}(G)$, then $d_{G'}(x)+d_{G'}(y)\leq n+1$. Without loss of generality, assume that $d_{ {G'}}(x)\leq d_{ {G'}}(y)$. Then
\begin{equation}\label{equ08}
d_{\widehat {G'}}(x)+d_{\widehat {G'}}(y)=2n-\big(d_ {G'}(x)+d_{G'}(y)\big)\geq n-1.
\end{equation}
Since $\delta(G')\geq\delta(G)\geq k$, $d_{\widehat {G'}}(x)\leq n-k$ and  $d_{\widehat {G'}}(y)\leq n-k$ hold. Thus
\begin{align}\label{equ09}
\begin{split}
d_{\widehat {G'}}(x)\geq n-1-d_{\widehat {G'}}(y)\geq n-1-n+k=k-1,\\
d_{\widehat {G'}}(y)\geq n-1-d_{\widehat {G'}}(x)\geq n-1-n+k=k-1.
\end{split}
\end{align}
Then
\begin{align}\label{equ00}
\begin{split}
k-1\leq d_{\widehat {G'}}(x)\leq n-k,\\
k-1\leq d_{\widehat {G'}}(y)\leq n-k.
\end{split}
\end{align}
Note that $f(x)=x(n-1-x)\geq (k-1)(n-k)$ for $k-1\leq x \leq n-k$. By (\ref{equ08}) and (\ref{equ09}),
\begin{equation}\label{equ0bu}
d_{\widehat {G'}}(x)d_{\widehat {G'}}(y)\geq d_{\widehat {G'}}(x)(n-1-d_{\widehat {G'}}(x))\geq (k-1)(n-k),
\end{equation}
the equality holds if and only if $d_{\widehat {G'}}(x)=n-k$, $d_{\widehat {G'}}(y)=k-1$.\\
By Lemma \ref{f3},
\[
\rho(\widehat {G'})\geq min\left\{\sqrt{d_{\widehat {G'}}(x)d_{\widehat {G'}}(y)}\bigg| xy\in E(\widehat {G'})\right\}\geq \sqrt{(k-1)(n-k)}.
\]
Then
\[
\sqrt{(k-1)(n-k)}\geq \rho(\widehat {G})\geq \rho(\widehat {G'})\geq \sqrt{(k-1)(n-k)}.
\]
Hence
\[
\rho(\widehat {G'})=\sqrt{(k-1)(n-k)}.
\]
And there is an edge $xy\in E(\widehat {G'})$, where $d_{\widehat {G'}}(x)=n-k$ and $d_{\widehat {G'}}(y)=k-1$.\\
Let $F$ be the complement of $\widehat {G'}$ containing $xy$.  By Lemma \ref {f3}, $F$ is a semi--regular bipartite graph with parts $X'\subseteq X$,  and $Y'\subseteq Y$.

There are two cases:

Case (3.1):   $F=K_{n-k,k-1}$.

 Then $|V(F)|= n-1$. If $F$ is the unique nontrivial complement of ${\widehat {G'}}$, then $G'=Q_n^k$, which implies that $G'$ is weakly Hamilton--connected, a contradiction.
 Then ${\widehat {G'}}$ has at least two nontrivial components.  By (\ref{equ08}), every nontrivial complement of ${\widehat {G'}}$ has order at least $n-1$. Thus ${\widehat {G'}}$ has only two nontrivial components $F, F'$. For $|V(F')|$, there are three subcases:\\
$(a)\quad |V(F')|=n-1; \quad (b) \quad |V(F')|=n; \quad (c)\quad |V(F')|=n+1.$ \\
Subcase (a):   By (\ref{equ00}) and Lemma \ref{f3}, it is  easy to get $F'=K_{n-k,k-1}$.\\
Subcases (b) and (c):  If there exists $uv\in E(F')$  such that $d_{F'}(u)+d_{F'}(v)\geq n$, then
\begin{equation}\label{equ0bu1}
d_{{F'}}(u)d_{{F'}}(v)\geq d_{{F'}}(u)(n-d_{{F'}}(u))\geq k(n-k).
\end{equation}
By (\ref{equ0bu1}) and Lemma \ref{f3},
\[
\rho(F')\geq min\left\{\sqrt {d_{F'}(u)d_{F'}(v)}\bigg|uv\in E(F')\right\}\geq\sqrt{k(n-k)}>\sqrt{(k-1)(n-k)},
\]
a contradiction.

Thus for any $xy\in E(F')$, $d_{F'}(x)+d_{F'}(y)=n-1$. That is, for any two nonadjacent vertices $x,y$ in $G'$, $d_{G'}(x)+d_{G'}(y)\geq n+1$.

Case (3.2):   $F\neq K_{n-k,k-1}$.

 Then $|V(F)|> n-1$. Note that every nontrivial complement of ${\widehat {G'}}$ has order at least $n-1$, then ${\widehat {G'}}$ has at most two nontrivial components. \\
1)\quad If ${\widehat {G'}}$ has only one nontrivial component $F$, then  any two nonadjacent vertices in distinct parts of $G'$ has degree sum at least $n+1$. \\
2)\quad If ${\widehat {G'}}$ has only two nontrivial components $F, F'$, then there are three subcases:
\begin{equation*}
(a)
\begin{cases} |V(F)|=n;\\
|V(F')|=n-1.
\end{cases}
(b)
\begin{cases} |V(F)|=n+1;\\
|V(F')|=n-1.
\end{cases}
(c)
\begin{cases} |V(F)|=n;\\
|V(F')|=n.
\end{cases}
\end{equation*}
Subcases (a) and (b):    By (\ref{equ00}) and Lemma \ref{f3}, $F'=K_{n-k,k-1}$.\\
Subcase (c):     If there exists $xy\in E(F')$, such that $d_{F'}(x)+d_{F'}(y)=n$. By (\ref{equ0bu1}),
\[
\rho(F')\geq min\left\{\sqrt {d_{F'}(x)d_{F'}(y)}\bigg|xy\in E(F')\right\}\geq\sqrt{k(n-k)}.
\]
Thus
\[
\rho(\widehat {G'})=max\{\rho(F), \rho(F')\}>\sqrt{(k-1)(n-k)},
\]
a contradiction.

Thus for any $xy\in E(F'), d_{F'}(x)+d_{F'}(y)=n-1$. That is, for any two nonadjacent vertices $x,y$ in $G'$, $d_{G'}(x)+d_{G'}(y)\geq n+1$.

Hence  $\sigma(G')=n+1$ holds  whether $F=K_{n-k,k-1}$ or $F\neq K_{n-k,k-1}$.  By Lemma \ref{th2}, $S_n^k \subseteq G'\subseteq R_n^k$.

If $G\subsetneqq S_n^k$, then $\rho(\widehat G)>\rho(\widehat {S^k_n})$.
By Lemma \ref{le5}, $\rho(\widehat G)>\rho(\widehat {Q^k_n})$, contradict with $\rho(\widehat G)\leq\rho(\widehat {Q^k_n})$.
Thus $G\supseteq S_n^k$.

Then $S_n^k \subseteq G\subseteq R_n^k$, a contradiction.

(4)\quad By contradiction. Suppose that $G$ is not weakly Hamilton--connected and $S^k_n\nsubseteq G\nsubseteq R^k_n$.  Let $G'=cl_{B_{n+2}}(G)$, by Theorem \ref{th3}, $G'$ is not weakly Hamilton--connected. For any  pair of nonadjacent vertices $x\in X$ and  $y\in Y$, $d_{G'}(x)+d_{G'}(y)\leq n+1$. Without loss of generality, $d_{ {G'}}(x)\leq d_{ {G'}}(y)$. Then
\begin{equation}\label{equ8}
d_{\widehat {G'}}(x)+d_{\widehat {G'}}(y)=2n-(d_{\widehat {G'}}(x)+d_{\widehat {G'}}(y))\geq n-1
\end{equation}
Since $\delta(G')\geq\delta(G)\geq k$, $d_{\widehat {G'}}(x)\leq n-k$ and $d_{\widehat {G'}}(y)\leq n-k$ hold. Thus
\begin{align}\label{equ9}
\begin{split}
d_{\widehat {G'}}(x)\geq n-1-d_{\widehat {G'}}(y)\geq n-1-n+k=k-1,\\
 d_{\widehat {G'}}(y)\geq n-1-d_{\widehat {G'}}(x)\geq n-1-n+k=k-1.
\end{split}
\end{align}
Then
\begin{align}\label{equ0}
\begin{split}
k-1\leq d_{\widehat {G'}}(x)\leq n-k,\\
k-1\leq d_{\widehat {G'}}(y)\leq n-k.
\end{split}
\end{align}
By Lemma \ref{f4} and (\ref{equ8}),
\[
q(\widehat {G'}) \geq min\left\{ d_{\widehat{G'}}(x)+d_{\widehat {G'}}(y)\bigg| xy\in E(\widehat {G'}) \right\}\geq n-1.
\]
By Lemma \ref{le5},
\[
n-1 \geq q(\widehat{G})\geq q(\widehat {G'})\geq n-1.
\]
Then
\[
q(\widehat {G'})=n-1.
\]
And there is an edge $xy\in E(\widehat {G'})$, where $d_{\widehat {G'}}(x)=n-k$ and $d_{\widehat {G'}}(y)=k-1$.\\
Let $F$ be the complement of $\widehat {G'}$ containing $xy$. By Lemma \ref {f4}, $F$ is a semi--regular bipartite graph with parts $X'\subseteq X$, and  $Y'\subseteq Y$.

There are two cases:

Case (4.1): $F=K_{n-k,k-1}$.

 Then $|V(F)|= n-1$. If $F$ is the unique nontrivial complement of ${\widehat {G'}}$, then $G'=Q_n^k$, which implies that $G'$ is weakly Hamilton--connected, a contradiction. By (\ref{equ8}), every nontrivial complement of ${\widehat {G'}}$ has order at least $n-1$. Thus ${\widehat {G'}}$ has only two nontrivial components $F$ and $ F'$. For $|V(F')|$, there are three subcases:\\
$(a)\quad |V(F')|=n-1; \quad (b) \quad |V(F')|=n; \quad (c)\quad |V(F')|=n+1.$ \\
Subcase (a):  By (\ref{equ0}) and Lemma \ref{f3}, $F'=K_{n-k,k-1}$.\\
Subcases (b) and (c): If there exists $uv\in E(F')$, such that $d_{F'}(u)+d_{F'}(v)\geq n$, then
\begin{equation}\label{equ0bu2}
d_{{F'}}(u)+d_{{F'}}(v)\geq d_{{F'}}(u)+(n-d_{{F'}}(u))\geq n.
\end{equation}
By (\ref{equ0bu2}) and Theorem \ref{f3},
\[
q(F')\geq min\left\{d_{F'}(x)+d_{F'}(y)\bigg|xy\in E(F')\right\}\geq n> n-1,
\]
a contradiction.

Thus for any $xy\in E(F'), d_{F'}(x)+d_{F'}(y)=n-1$ holds. That is, for any two nonadjacent vertices $x,y$ in $G'$,  we have $d_{G'}(x)+d_{G'}(y)\geq n+1$.

Case (4.2): $F\neq K_{n-k,k-1}$.

Then $|V(F)|> n-1$. Note that every nontrivial complement of ${\widehat {G'}}$ has order at least $n-1$. Then ${\widehat {G'}}$ has at most two nontrivial components. \\
1)\quad If ${\widehat {G'}}$ has only one nontrivial component $F$, then for any two nonadjacent vertices in distinct parts of $G'$ has degree sum at least $n+1$. \\
2)\quad If ${\widehat {G'}}$ has only two nontrivial components $F$ and $F'$, then there are three subcases:
\begin{equation*}
(a)
\begin{cases} |V(F)|=n;\\
|V(F')|=n-1.
\end{cases}
(b)
\begin{cases} |V(F)|=n+1;\\
|V(F')|=n-1.
\end{cases}
(c)
\begin{cases} |V(F)|=n;\\
|V(F')|=n.
\end{cases}
\end{equation*}
Subcases (a) and (b): By (\ref{equ0}) and Theorem \ref{f3}, $F'=K_{n-k,k-1}$.\\
Subcase (c): If there exist $uv\in E(F')$ such that $d_{F'}(u)+d_{F'}(v)=n$,  by (\ref{equ0bu2}) and Theorem \ref{f3}, then
\[
q(F')\geq min\left\{d_{F'}(x)+d_{F'}(y)\bigg|xy\in E(F')\right\}\geq n.
\]
Thus
\[
q(\widehat {G'})=max\{q(F), q(F')\}>n-1,
\]
a contradiction.

Thus for any $xy\in E(F')$, we have $d_{F'}(x)+d_{F'}(y)=n-1$. That is, for any two nonadjacent vertices $x,y$ in $G'$, $d_{G'}(x)+d_{G'}(y)\geq n+1$ holds.

Hence $\sigma(G')=n+1$ holds whether $F=K_{n-k,k-1}$ or $F\neq K_{n-k,k-1}$.   By Lemma \ref{th2}, $S_n^k \subseteq G'\subseteq R_n^k$.

If $G\subsetneqq S_n^k$, then $q(\widehat G)>q(\widehat {S^k_n})$. By Lemma \ref{le5}, $q(\widehat G)>q(\widehat {Q^k_n})$,
 contradicts with $q(\widehat G)\leqq(\widehat {Q^k_n})$. Thus $G\supseteq S_n^k$.

Then $S_n^k \subseteq G\subseteq R_n^k$, a contradiction.

The proof is finished.
\end{proof}

By the proof of Theorem \ref{th6} (1) and (2), we get the following result.
\begin{corollary}\label{co00}
Let $G=G(X,Y,E)$ be a bipartite graph with $|X|=|Y|=n$ and the minimum degree $\delta(G) \geq k\geq 2$. \\
(1) If $n> k(k+1)$ and $\rho(G)\geq \sqrt{n(n-k+1)}$, then $G$ is weakly Hamilton--connected.\\
(2) If $n> k(k+1)$ and $q(G)\geq 2n-k+1$, then $G$ is weakly Hamilton--connected.
\end{corollary}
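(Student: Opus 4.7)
The plan is to mimic the proof of Theorem \ref{th6}(1) and (2), but replace the spectral comparison $\rho(G)\geq \rho(Q^k_n)$ (respectively $q(G)\geq q(Q^k_n)$) by the weaker direct bounds $\rho(G)\geq \sqrt{n(n-k+1)}$ and $q(G)\geq 2n-k+1$. The advantage is that one no longer needs Lemma \ref{le5}(1)--(2) to relate $\rho(Q^k_n)$ (or $q(Q^k_n)$) to the threshold $\sqrt{n(n-k+1)}$ (or $2n-k+1$); the hypothesis already supplies this threshold. The price to pay is a slightly stronger arithmetic hypothesis $n>k(k+1)$ (strict), which will be used precisely to pass from $e(G)\geq n(n-k+1)$ to the strict edge bound required by Theorem \ref{le4}.

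For part (1), I would start from Lemma \ref{f1} to obtain $\sqrt{n(n-k+1)}\leq \rho(G)\leq \sqrt{e(G)}$, which squares to $e(G)\geq n(n-k+1)$. The key elementary observation is that
\[
n(n-k+1)-\bigl(n(n-k)+k(k+1)\bigr)=n-k(k+1)>0
\]
under the hypothesis $n>k(k+1)$, so in fact $e(G)>n(n-k)+k(k+1)$. Combined with $\delta(G)\geq k\geq 2$ and the obvious $n\geq 2k$ (which also follows from $n>k(k+1)\geq 2k$ for $k\geq 2$), Theorem \ref{le4} applies directly and forces $G$ to be weakly Hamilton--connected.

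For part (2), the argument is parallel, using Lemma \ref{f2} in place of Lemma \ref{f1}: from $2n-k+1\leq q(G)\leq \frac{e(G)}{n}+n$ one deduces $e(G)\geq n(n-k+1)$, and then the same arithmetic step $n>k(k+1)$ upgrades this to $e(G)>n(n-k)+k(k+1)$; Theorem \ref{le4} again concludes the proof.

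There is essentially no obstacle here --- the corollary is a ``decoupling'' of Theorem \ref{th6}(1)--(2) from Lemma \ref{le5}, so the only care needed is the strict versus non-strict comparison: the hypothesis of Theorem \ref{le4} requires a strict edge inequality, and the strict hypothesis $n>k(k+1)$ (rather than the $n\geq k(k+1)$ used in Theorem \ref{th6}) is exactly what is needed to convert the non-strict spectral bound into the strict edge bound.
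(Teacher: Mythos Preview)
Your proposal is correct and follows exactly the approach the paper intends: the paper simply states that the corollary follows ``by the proof of Theorem \ref{th6} (1) and (2)'', and you have spelled out precisely that argument, correctly noting that the strict hypothesis $n>k(k+1)$ is what converts the non-strict edge estimate $e(G)\geq n(n-k+1)$ into the strict bound $e(G)>n(n-k)+k(k+1)$ required by Theorem \ref{le4}.
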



\end{document}